%
%
%
\pdfpagewidth=8.5truein
\pdfpageheight=11truein
%

\documentclass[10pt]{article}
\usepackage{amssymb,amsmath,amsthm}
\usepackage{enumerate}
\usepackage{tikz}
\usepackage{hyperref}
\usetikzlibrary{arrows}

\def\l{{\lambda}}
\def\Z{{\mathbb{Z}}}

\newenvironment{packedItem}{
\begin{itemize}
  \setlength{\itemsep}{1pt}
  \setlength{\parskip}{0pt}
  \setlength{\parsep}{0pt}
}{\end{itemize}}

\newenvironment{packedEnum}{
\begin{enumerate}
  \setlength{\itemsep}{1pt}
  \setlength{\parskip}{0pt}
  \setlength{\parsep}{0pt}
}{\end{enumerate}}

\setlength{\marginparwidth}{.75in}
\let\oldmarginpar\marginpar
\renewcommand\marginpar[1]{\-\oldmarginpar[\raggedleft\footnotesize #1]%
{\raggedright\footnotesize #1}}

\setlength{\topmargin}{ -1.5cm}
\setlength{\oddsidemargin}{ -0.3cm}
\textwidth 17cm
\textheight 22.4cm

\newtheorem{theorem}{Theorem}
\newtheorem{corollary}[theorem]{Corollary}
\newtheorem{lemma}[theorem]{Lemma}

\newtheorem{observation}[theorem]{Observation}

\theoremstyle{definition}
\newtheorem{construct}{Construction}

\renewcommand{\a}{\alpha}
\renewcommand{\b}{\beta}
\newcommand{\g}{\gamma}

\newcommand{\TS}{{\rm{TS}}}
\newcommand{\ts}{{\rm{TS}}}
\newcommand{\BIBD}{{\rm{BIBD}}}

\newcommand{\etour}{\mathbb{E}}

\newcommand{\s}{\mathcal{S}}

\usepackage{enumitem}
\setlistdepth{9}

\setlist[enumerate,1]{label=\arabic*.}
\setlist[enumerate,2]{label=(\alph*)}
\setlist[enumerate,3]{label=\roman*.}
\setlist[enumerate,4]{label=\Alph*.}
\setlist[enumerate,5]{label=\Roman*.}
\setlist[enumerate,6]{label=(\arabic*)}
\setlist[enumerate,7]{label=\alph*.}
\setlist[enumerate,8]{label=(\Alph*)}
\setlist[enumerate,9]{label=(\Roman*)}

\renewlist{enumerate}{enumerate}{9}

\newcommand{\vanish}[1]{}

\begin{document}

 \title{${\rm{TS}}(v,\lambda)$ with cyclic 2-intersecting Gray codes:\\
 $v\equiv 0$ or $4\pmod{12}$}

 \author{
 John Asplund\\
 {\small Department of Technology and Mathematics} \\
 {\small Dalton State College} \\
 {\small Dalton, GA 30720, USA} \\
 {\small jasplund@daltonstate.edu}\\
 \\
 Melissa Keranen\\
  {\small Department of Mathematical Sciences} \\
 {\small Michigan Technological University} \\
 {\small Houghton, MI 49931, USA} \\
 {\small msjukuri@mtu.edu}\\
\\
  }
 \date{}
 \maketitle

\begin{abstract}
A ${\rm{TS}}(v,\l)$ is a pair $(V,\mathcal{B})$ where $V$ contains $v$ points and $\mathcal{B}$ contains $3$-element subsets of $V$ so that each pair in $V$ appears in exactly $\l$ blocks. A $2$-block intersection graph ($2$-BIG) of a ${\rm{TS}}(v,\l)$ is a graph where each vertex is represented by a block from the ${\rm{TS}}(v,\l)$ and each pair of blocks $B_i,B_j\in \mathcal{B}$ are joined by an edge if $|B_i\cap B_j|=2$. 
We show that there exists a ${\rm{TS}}(v,\l)$ for $v\equiv 0$ or $4\pmod{12}$ whose $2$-BIG is Hamiltonian. 
This is equivalent to the existence of a $\TS(v,\l)$ with a cyclic 2-intersecting Gray code.
\end{abstract}

\section{Introduction}
A balanced incomplete block design $\BIBD(v,k,\lambda)$  is a set of $v$ points with a collection of blocks of size $k$ with the property that every pair of points appears in exactly $\lambda$ blocks. The design is said to be \textit{simple} if there are no repeated blocks.  If we require the blocks to have size $k=3$, then we call it a triple system, $\ts(v,\lambda)$. A $\ts(v,1)$ is called a Steiner triple system. 
Dehon \cite{D} showed that there exists a simple $\ts(v,\l)$ if and only if $\l\leq v-2$, $\l v(v-1)\equiv 0\pmod{6}$ and $\l(v-1)\equiv 0\pmod{2}$. We say that $v,\l$ are \textit{admissible} if there exists a $\ts(v,\l)$. 

A \textit{block intersection graph} (BIG) of a $\ts(v,\l)$ $(V,\mathcal{B})$ is a graph where each block in $\mathcal{B}$ represents a vertex and two vertices in the BIG are adjacent if the corresponding blocks share at least one point. 
This can be extended by varying the adjacency rules as follows. A \textit{$k$-block intersection graph} ($k$-BIG) of a $\ts(v,\l)$ $(V,\mathcal{B})$ has the same vertex set as a BIG, but two vertices in the $k$-BIG are adjacent if they share \textit{exactly} $k$ vertices in common. 
A cycle in a graph that contains all the vertices in it is called a {\em Hamilton cycle}, and if a graph contains a Hamilton cycle we say it is {\em Hamiltonian}.  Similarly, a path in a graph that contains all the vertices in it is called a {\em Hamilton path}. The question of hamiltonicity in $k$-BIGs is closely related to the study of Gray codes.

An $n$-bit Gray code, is an ordering of the $2^{n}$ strings of length $n$ over $\{0,1\}$ such that every pair of successive strings differ in exactly one position. Frank Gray, a physicist and researcher at Bell Labs, introduced the use of these codes in 1947 to prevent spurious output from electromechanical switches. In recent years, these codes have been useful in the fields of error correcting codes and communication.
A generalization of this concept is the term {\em combinatorial Gray code} which was introduced in 1980 and refers to any method for generating combinatorial objects so that successive objects differ in some pre-described, small way \cite{CS}.
A $\kappa$-intersecting Gray code for a $\BIBD(v,k,\lambda)$, $S$, is a listing of the blocks of $S$ in such a way that consecutive blocks
intersect in exactly $\kappa$ points.  Thus the existence of $\kappa$-intersecting Gray codes for $\BIBD(v,k,\lambda)$s where $1 \leq \kappa \leq k-1$ can be established by determining if the $\kappa$-BIG graph of the $\BIBD$ contains a Hamilton path.  A
{\em cyclic $\kappa$-intersecting Gray code} is equivalent to a Hamilton cycle in the $\kappa$-BIG.

Our investigation focuses on the existence of cyclic $2$-intersecting Gray codes for $\ts(v,\l)$s.
Dewar \cite{dewar} showed that there exists a $\ts(v,2)$ whose $2$-BIG is Hamiltonian if $v\equiv 3$ or $7\pmod{12}$ and $v\geq 7$, or $v\equiv 1$ or $4\pmod{12}$ and $v\not\equiv 0\pmod{5}$. 
This result was extended by Erzurumluo\u{g}lu and Pike in \cite{aras2} where the complete spectrum was given for the existence of $\ts(v,2)$s whose $2$-BIGs are Hamiltonian. 
It is important to realize that there can be many distinct $\ts(v,\l)$ with the same parameters. Thus it is possible for there to exist a $\ts(v,\l)$ that is Hamiltonian (as shown in this paper) and for a $\ts(v,\l)$ to be non-Hamiltonian (as shown in \cite{aras1}).
On the other hand, Hor\'{a}k \cite{horak} showed that the BIG of all $\ts(v,1)$ are Hamiltonian. 
Later, Alspach et al. \cite{AHM} showed that under certain conditions all pairwise balanced designs with the same parameters have a Hamiltonian BIG.
In \cite{HPR}, it was shown that all $\ts(v,\l)$s have Hamiltonian $1$-BIGs for arbitrary index $\l$.
Despite the fact that the $1$-BIG of a $\ts(v,\l)$ and the $2$-BIG of the same $\ts(v,\l)$ are subgraphs of the BIG of the same $\ts(v,\l)$, not all $\ts(v,\l)$s have Hamiltonian $2$-BIGs. In \cite{aras1}, all but a finite number of elements of the spectrum were determined for which there exists a $\ts(v,2)$ whose $2$-BIG is connected but has no Hamilton path (and therefore no Hamilton cycle). 
Mahmoodian made the observation that the $2$-BIG of the unique $\ts(6,2)$ is the Petersen graph \cite{mah}. Additionally, Colbourn and Johnstone \cite{CJ} showed that there is a $\ts(19,2)$ whose $2$-BIGs is connected but is not Hamiltonian. While the existence question is settled in the case of $1$-BIGs for $\ts(v,\l)$ with arbitrary index $\l$, this is the first paper to date that focuses on the case where $\l>2$ when $\kappa=2$.

As $\lambda$ increases, the $2$-BIG for the corresponding triple system contains more edges, so one might suspect that it becomes more likely that a Hamilton cycle would exist. However, because the $2$-BIG gets significantly more complicated as $\lambda$ increases, it quickly becomes difficult to determine the structure of the graph.
Recall that a $\ts(v,\l)$ exists if and only if $\l\leq v-2$, $\l v(v-1)\equiv 0\pmod{6}$ and $\l(v-1)\equiv 0\pmod{2}$. Thus when $v$ is even, $\l$ must be even. Suppose $n \equiv 1,5 \pmod{6}$ and $v=2n+2$. 
In \cite{S}, Schreiber gives a construction (see Section~\ref{sec:construction}) for a set of $\binom{v}{3}$ triples (blocks) on $v$ points that can be partitioned into $n$ sets, each forming a $\ts(v,2)$. Then the union of any $t$ of these sets is a simple $\ts(v,2t)$ for $1 \leq t \leq n$.
In this paper, we show that the $2$-BIG of any $\ts(v,\l)$ formed by taking $t=\frac{\l}{2}$ sets from the Schreiber construction is Hamiltonian. Thus the main result is as follows.

\begin{theorem}\label{v04mod12}
If $v\equiv 0$ or $4\pmod{12}$, then for all admissible $v,\l$ there exists a simple $\TS(v,\l)$ with a cyclic $2$-intersecting Gray code.
\end{theorem}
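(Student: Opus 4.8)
The plan is to prove the theorem constructively using Schreiber's decomposition from Section~\ref{sec:construction}. First I would translate the arithmetic. Writing $n=(v-2)/2$, the hypothesis $v\equiv 0$ or $4\pmod{12}$ is exactly the condition $n\equiv 1$ or $5\pmod 6$ needed to run Schreiber's construction; and for these $v$ the admissibility conditions reduce to ``$\lambda$ even and $\lambda\le v-2$,'' so we may write $\lambda=2t$ with $1\le t\le n$. Thus every admissible pair $(v,\lambda)$ is realized as the union of the first $t$ of the $n$ base systems $S_1,\dots,S_n$ produced by the construction, each of which is a $\TS(v,2)$. This reduces the theorem to a single claim: for every $t$ with $1\le t\le n$, the $2$-BIG of $\bigcup_{i=1}^{t}S_i$ contains a Hamilton cycle.

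I would establish this claim by induction on $t$, building the Hamilton cycle one base system at a time. The engine of the induction is a local \emph{path} statement about the individual copies: for each $i$ the $2$-BIG of the single $\TS(v,2)$ $S_i$ admits a Hamilton path $P_i$ whose two endpoints $x_i,y_i$ are prescribed blocks, chosen coherently across the copies. The base case $t=1$ then asks that $x_1,y_1$ $2$-intersect each other, so that $P_1$ closes into a Hamilton cycle $C_1$ of the $2$-BIG of $S_1$. I would exhibit this directly from the explicit block description coming from Schreiber's construction, rather than quoting Dewar's result or the Erzurumluo\u{g}lu--Pike spectrum, since I need tight control of the path endpoints and those results neither supply endpoints nor cover the whole relevant range in a usable form.

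For the inductive step, assume a Hamilton cycle $C_t$ on $\bigcup_{i=1}^{t}S_i$ has been built. To absorb $S_{t+1}$ I would take its Hamilton path $P_{t+1}$ with endpoints $x_{t+1},y_{t+1}$, locate an edge $\{X,Y\}$ of $C_t$ with $|X\cap x_{t+1}|=2$ and $|Y\cap y_{t+1}|=2$, and splice $P_{t+1}$ in by replacing the edge $XY$ with the path $X,x_{t+1},\dots,y_{t+1},Y$. Because the systems $S_1,\dots,S_n$ collectively exhaust all $\binom{v}{3}$ triples, a fixed block $2$-intersects $3(v-3)$ other triples, so cross-copy $2$-intersecting edges are abundant; the real work is to certify that at least one \emph{usable} splicing edge always sits on the current cycle. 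I would do this by pinning down, in the explicit construction, a concrete edge of $C_t$ that $2$-intersects the chosen endpoints of $P_{t+1}$, arguing uniformly in $t$.

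The main obstacle is the bookkeeping inside the explicit construction: the endpoints $x_i,y_i$ must be chosen so that simultaneously (i) each $P_i$ really is a Hamilton path of the $2$-BIG of $S_i$, and (ii) the required cross-copy $2$-intersections are guaranteed irrespective of which blocks have already been assembled into $C_t$. Making these endpoint choices uniform across all copies---ideally by exploiting the cyclic/rotational symmetry underlying Schreiber's construction, so that a single verification serves every index $i$---is what converts a thicket of case analysis into one clean argument, and is where the bulk of the effort lies.
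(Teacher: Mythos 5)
Your arithmetic reduction is correct ($v\equiv 0$ or $4\pmod{12}$ iff $n\equiv 1$ or $5\pmod 6$, and the admissible simple systems are exactly $\lambda=2t$ with $1\le t\le n$), but the engine of your induction fails at its very first step: the $2$-BIG of a \emph{single} Schreiber system $\s_g$ admits no Hamilton path; for the smallest admissible orders it is not even connected. Indeed, any two Step $1a$ triples of the same $\s_g$ have point sums congruent to $3g\pmod n$, so if they shared two points they would also share the third; hence no two of them are adjacent, and the same computation shows that the $Q_3$-cubes formed in Steps $1a$/$1b$ over distinct Step $1a$ triples of $\s_g$ have no edges between them. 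Moreover, a cube can be adjacent to a block of Steps $1c$, $2a$, $2b$, $3$ of $\s_g$ only if its underlying triple contains an arc pair $\{x,-2x\}$. Take $n=7$ (so $v=16\equiv 4\pmod{12}$) and the triple $\{1,2,4\}$ in $\s_0$: it sums to $0\pmod 7$ and contains no pair $\{x,-2x\}$ (the arcs are $(1,5)$, $(2,3)$, $(4,6)$, $(3,1)$, $(5,4)$, $(6,2)$), so its cube is an isolated $Q_3$-component of the $2$-BIG of $\s_0$. The phenomenon is not sporadic: for $n\ge 11$ at most $n-1$ of the $(n-1)(n-2)/6$ Step $1a$ triples of $\s_g$ contain an arc pair, so most cubes are isolated; and even ignoring connectivity, a Hamilton path would need at least one vertex outside the cubes between visits to distinct cubes, i.e.\ $\Theta(n^2)$ such vertices, while Steps $1c$--$3$ supply only $6n-2$ blocks. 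So requirement (i) of your plan --- that each $P_i$ is a Hamilton path of the $2$-BIG of $S_i$ --- is false, and the one-system-at-a-time induction collapses at the base case $t=1$ (which, note, is also a claim your target theorem makes and is the genuinely hard part, not something to be dispatched ``directly from the explicit block description'').

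This obstruction is exactly what shapes the paper's proof: $2$-intersections among Step $1a$/$1b$ blocks occur only between blocks of \emph{different} systems, which is why the honeycomb graphs $H^n_{g-1,g}$ are built from two consecutive systems, and strong connectivity of the arc digraph likewise requires the union $D_g'\cup D_{g+1}'$ (Lemma~\ref{digraphConnected}). Accordingly, the paper partitions the blocks by construction step rather than by system: Steps $1a$/$1b$ of all $\lambda/2$ systems are handled at once by a Hamilton walk through $H^n_{0,1,\ldots,\frac{\l}{2}-1}$ crossed with $Q_3$ (Lemmas~\ref{2hexagons} and~\ref{HamProduct}), Steps $1c$--$3$ of all systems are handled at once by an Euler-tour argument (Lemma~\ref{hamPathSteps1c-3}), and the two pieces plus one reserved cube are spliced together; only this final endpoint-controlled splicing resembles your proposal. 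Any repair of your scheme would have to absorb at least two consecutive systems per inductive step, with a three-system step when $t$ is odd (as in the proof of Lemma~\ref{2hexagons}), at which point you would have reconstructed the paper's argument rather than simplified it.
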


\section{The Construction}\label{sec:construction}
Let $n\equiv 1$ or $5\pmod{6}$ and $v=2n+2$ for the remainder of the paper.
The following construction is a restatement of what was given by Schreiber in \cite{S} for forming $v \choose 3$ triples on $v$ points that can be partitioned into $n$ sets, each forming a $\ts(v,2)$.  Let $\Z_n \times \Z_2 \cup \{\infty_1,\infty_2\}$ be the set of $v$ points.

\begin{construct}\label{sblocks}
\cite{S}
Let $g\in \Z_n=\{0,1,\ldots,n-1\}$, and let $\tau$ denote the automorphism of $\Z_n$ such that $\tau(h)=-2h$ for each $h\in \Z_{n}$. Then $\tau(0)=0$, and since $\Z_n$ can contain no element of order $3$ (as $\gcd(n,3)=1$), $\tau(h)\neq h$ for $h\neq 0 $. Thus $\tau$ permutes the elements of $\Z_n^*$ ($\Z_n^*=\Z_n\setminus\{0\}$) into cycles. Call the ordered pair $(h,\tau(h))$, $g\in \Z_n^*$, an \textit{arc} and $(-h,\tau(-h))$ the \textit{opposite} arc in a directed graph on the vertex set $\Z_n$. Thus there are $\frac{1}{2}(n-1)$ pairs of opposite arcs for each $h$. 
Let $D_g'$ be the directed graph with vertex set $\Z_n$ and formed by the set of arcs in $\{(h+g,\tau(h)+g),(-h+g,\tau(-h)+g)\,:\, h\in \Z_n\}$. We form the set of blocks $\s_g$ for each $g\in \Z_n$ in the following manner:\\

\textit{Step $0$.} For each pair of opposite arcs (on the same cycle or not) arbitrarily color one red, one blue.\\

\textit{Step $1$a.} Take each triple of different elements $\{(a+g,0),(b+g,0),(c+g,0)\}$, where $a,b,c\in \Z_n$, if $a+b+c\equiv 3g\pmod{n}$.\\

\textit{Step $1$b.} Suppose $\{(a,0),(b,0),(c,0)\}$ is a triple from Step $1$a. Form $7$ more triples:\\
$\{(a,1),(b,0),(c,0)\}$, $\{(a,0),(b,1),(c,0)\}$, $\{(a,0),(b,0),(c,1)\}$, $\{(a,1),(b,1),(c,0)\}$, $\{(a,1),(b,0),(c,1)\}$, \\
$\{(a,0),(b,1),(c,1)\}$, $\{(a,1),(b,1),(c,1)\}$.\\

\textit{Step $1$c.} For each arc $(a,b)$ (that is , $b=-2a$), we add the two triples $\{(a,0),(b,0),(a,1)\}$ and $\{(a,0),(b,1),(a,1)\}$.\\

\textit{Step $2$a.} If $(a,b)$ is a \textit{red} arc, add the $4$ triples $\{\infty_0,(a,0),(b,0)\}$, $\{\infty_0,(a,1),(b,1)\}$, $\{\infty_1,(a,0),(b,1)\}$, and $\{\infty_1,(a,1),(b,0)\}$. \\

\textit{Step $2$b.} If $(a,b)$ is a \textit{blue} arc, add the $4$ triples $\{\infty_1,(a,0),(b,0)\}$, $\{\infty_1,(a,1),(b,1)\}$, $\{\infty_0,(a,0),(b,1)\}$, and $\{\infty_0,(a,1),(b,0)\}$. \\

\textit{Step $3$.} Add the $4$ triples $\{\infty_0,\infty_1,(g,0)\}$, $\{\infty_0,\infty_1,(g,1)\}$, $\{\infty_0,(g,0),(g,1)\}$ and $\{\infty_1,(g,0),(g,1)\}$. \\
\end{construct}

Based on this construction, for each $g\in \mathbb{Z}_n$, $\s_g$ is a set of triples that forms a $\ts(v,2)$ and $\s_{g_1}\cap \s_{g_2}=\varnothing$ for each $g_1,g_2\in \Z_n$. 
Thus, $\bigcup_{i=1}^{\l/2} \s_i$ is a set of triples that forms a $\ts(v,\l)$.
Furthermore, the union of any $t$ of these $S_{g}$'s is a simple $\TS(v,2t)$ for $1 \leq t \leq n$.

\section{Subgraph of $2$-BIG Corresponding to Steps 1a and 1b}\label{sec:v04mod6}
To show that the $2$-BIG of a $\ts(v,\l)$ formed by Construction~\ref{sblocks} is Hamiltonian, we will find vertex disjoint paths that together span the vertex set and whose end vertices are adjacent.  Then we will concatenate the paths to form a Hamilton cycle.   
We begin this section by finding a Hamilton cycle through the portion of the $2$-BIG that is formed from the blocks of the $\ts(v,\l)$ that are given in Steps 1a and 1b of Construction~\ref{sblocks}.

 A lobster graph is a tree in which all the vertices
are within distance two of a central path. We call a path with $k$ edges $\{a_0a_1,a_1a_2,\ldots,a_{k-1}a_k\}$ a {\em $k$-path} and is denoted as $[a_0,a_1,\ldots,a_k]$. 
We denote a walk on the edges $\{x_1x_2,x_2x_3, \ldots, x_{k-1}x_{k}\}$ as 
$(x_1,x_2,x_3, \ldots,x_k)$. Let $W$ be a walk on a graph $G$ that visits every vertex.  
We say that $W=(x_1,x_2,\ldots,x_k)$ is an {\em $(x_1,x_k)$-Hamilton walk} in $G$ if 
the subgraph induced by the edges (multiedges removed) of this walk is spanning and a lobster graph.

Recall that $n\equiv 1$ or $5\pmod{6}$ and $v=2n+2$. Consequently, since $v$ is even, $\l$ is also even. 
Let $G$ be the $2$-BIG of the $\ts(v,\l)$ formed from Construction~\ref{sblocks} and let $H^n_{g_{1},g_{2},\dots,g_{k}}$ be the subgraph of $G$ induced by the triples
given in Step 1a of Construction~\ref{sblocks} using $\s_{g_{1}}$, $\s_{g_{2}}$ $\cdots$ $\s_{g_{k}}$.
Consider how the triples (blocks) are described in Step~1a.
Because the second coordinate of each point in the triples described in Step $1a$ is $0$, we will let $\{\a,\b,\g\}_{g}$ denote the triple $\{(a+g,0), (b+g,0), (c+g,0)\}$ where $\a=a+g$, $\b=b+g$, $\g=c+g$ for some $g\in \Z_n$.
For $g_1,g_2\in\Z_n$,  $H^n_{g_{1},g_{2}}$  can be drawn in a honeycomb configuration when $\gcd\{g_1,g_2,n\}=1$, so we will refer to this subgraph as the {\em honeycomb graph}. We provide the honeycomb graphs $H_{0,1}^{11}$ and $H_{0,1}^{13}$ in Figures~\ref{hex11} and \ref{hex13} respectively.  

\begin{figure}
 \begin{minipage}{.45\textwidth}
  \centering
	\mbox{}
  \vspace{.5in}
	\includegraphics[scale=1]{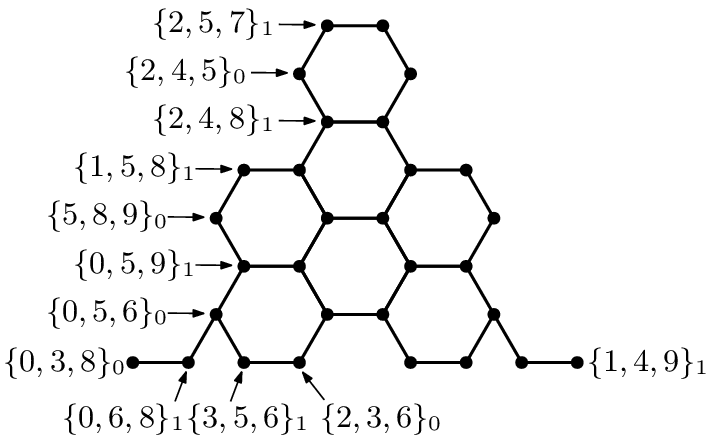}
\caption{$2$-BIG formed from blocks in Step $1a$ in Construction~\ref{sblocks} when $n=11$, $g_1=0$, and $g_2=1$}\label{hex11}
 \end{minipage}
\hspace{.2in}
 \begin{minipage}{.45\textwidth}
  \centering
	\includegraphics[scale=1]{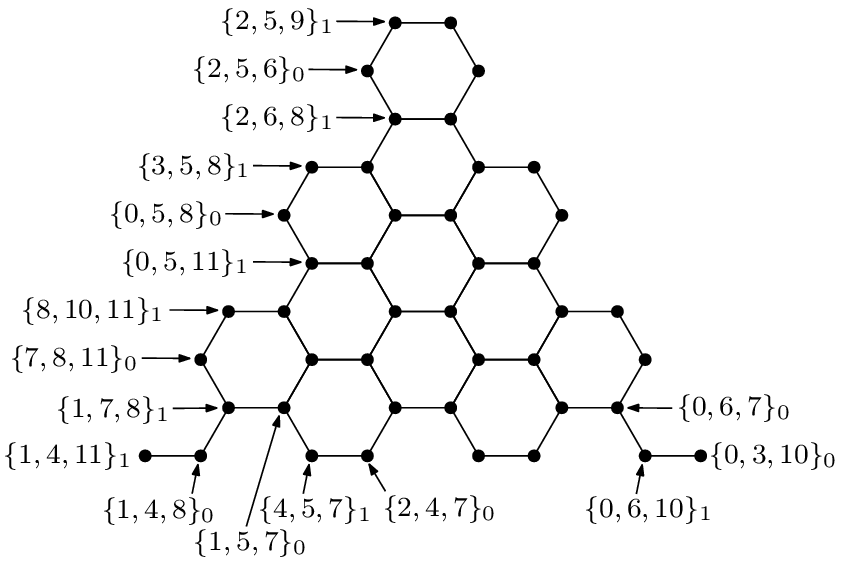}
\caption{$2$-BIG formed from blocks in Step $1a$ in Construction~\ref{sblocks} when $n=13$, $g_1=0$, and $g_2=1$}\label{hex13}
 \end{minipage}
\end{figure}

\begin{lemma}\label{2hexagons}
$H^n_{0,1, \ldots,\frac{\l}{2}-1}$ contains a $(Z,Y)$-Hamilton walk where $Z=\{n-2,1,4\}$ and $Y=\{\frac{\l}{2}+1,\frac{\l}{2}-2,\frac{\l}{2}-5\}$ if $\frac{\l}{2}$ is even, or $Y=\{\frac{\l}{2},\frac{\l}{2}-3,\frac{\l}{2}-6\}$ if $\frac{\l}{2}$ is odd.
\end{lemma}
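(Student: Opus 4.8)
The plan is to translate the $2$-BIG adjacency among Step~1a blocks into an elementary condition on $3$-subsets of $\Z_n$ and then to build the walk by induction on $k=\frac{\l}{2}$. Writing a Step~1a block of $\mathcal{S}_g$ in the $\{\a,\b,\g\}_g$ notation, its first coordinates sum to a fixed residue $s_g$ modulo $n$ that depends only on $g$. Two consequences organize everything. First, if two Step~1a blocks in the \emph{same} layer met in exactly two points, the third point would be forced by the common sum $s_g$, so the blocks would coincide; hence each layer induces an independent set. Second, a block in layer $g$ and a block in layer $g'\neq g$ are adjacent exactly when one is obtained from the other by replacing a single element $w$ with $w+c(g'-g)$, where $c$ is the fixed constant determined by the construction. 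From these two facts I would read off that consecutive layers induce the honeycomb graph of Figures~\ref{hex11} and~\ref{hex13}: it is connected (by the $\gcd$ hypothesis), bipartite across the two layers, generically $3$-regular, and of girth $6$, with the hexagonal faces obtained by cyclically shifting the three coordinates. The blocks of lower degree are precisely those for which a coordinate shift collides with another coordinate, and I would catalogue these ``defect'' blocks first, since they are forced to sit off the spine as legs or feet of the lobster.

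I would then argue by induction on $k$, with base case $k=2$, i.e.\ the single honeycomb $H^n_{0,1}$ drawn in the figures. For the base case the goal is an explicit spanning lobster: a snaking (boustrophedon) spine threading the hexagonal faces, the remaining blocks attached as legs, and the defect blocks as feet, with the two extreme spine vertices chosen to be $Z=\{n-2,1,4\}$ and $Y=\{3,0,n-3\}$ (the $k=2$ value of the stated formula). The $(Z,Y)$-Hamilton walk itself is then the traversal of this tree that uses each spine edge once and doubles every other edge. Because the shape of the honeycomb depends on $n\bmod 6$, I expect this step to split into the sub-cases $n\equiv1$ and $n\equiv5\pmod{6}$, corresponding to $H^{13}_{0,1}$ and $H^{11}_{0,1}$ respectively; the two explicit lobsters would be described once and reused.

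For the inductive step I would assume a spanning lobster of $H^n_{0,\ldots,k-1}$ with extreme vertices $Z$ and $Y_k$ and graft on the new independent layer $\mathcal{S}_k$. Since every block of $\mathcal{S}_k$ is adjacent to blocks in each earlier layer, each can be hung within distance two of the current spine; the point is to do this so that the induced edge set stays a tree and so that the walk can be rerouted through the new blocks near the $Y$-end. I would carry this out in two modes according to the parity of $k$: when $k$ is even the new layer attaches as legs and feet without advancing the spine, so $Y_{k+1}=Y_k$, whereas when $k$ is odd the spine is rerouted and its endpoint advances by adding $2$ to each of its three entries, so $Y_{k+1}=Y_k+2$. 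A direct check confirms that these two update rules, started from $Y_2=\{3,0,n-3\}$, reproduce exactly the closed forms $\{\frac{\l}{2}+1,\frac{\l}{2}-2,\frac{\l}{2}-5\}$ and $\{\frac{\l}{2},\frac{\l}{2}-3,\frac{\l}{2}-6\}$ claimed for even and odd $\frac{\l}{2}$.

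The hard part, I expect, is the inductive grafting together with the verification that the lobster property survives it: one must simultaneously guarantee that every block of the new layer (including its own defect blocks, where a coordinate shift collapses) lands within distance two of the spine, that no cycle is introduced into the induced edge set, and that the rerouting deposits the endpoint on the prescribed triple. Reconciling the two parity modes with the $n\bmod 6$ sub-cases inherited from the base step is where the bookkeeping will be heaviest, and it is here that a fully explicit, coordinate-level description of the walk — rather than an abstract existence argument — will almost certainly be required.
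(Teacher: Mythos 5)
Your structural observations are correct (each Step~1a layer is an independent set; blocks in layers $g$ and $g'$ are adjacent exactly when one is obtained from the other by shifting a single coordinate by $3(g'-g)$), and your endpoint bookkeeping is consistent with the statement: starting from $Y_2=\{3,0,n-3\}$ and advancing by $2$ only when an odd-indexed layer is added does reproduce both closed forms. However, there are two genuine gaps, and they sit exactly where the content of the lemma lies. The first is your base case. $H^n_{0,1}$ is not a single graph that can be handled by an explicit lobster ``described once and reused'': it is an infinite family with $(n-1)(n-2)/3$ vertices, so the snaking spine must be constructed and verified uniformly in $n$. The paper spends nearly all of its effort on precisely this point, via a second induction on $n$: explicit walks for $n=11,13$ (Figures~\ref{hex11path} and~\ref{hex13path}), a coordinate-level proof that $H^{n-6}_{g-1,g}$ embeds in $H^n_{g-1,g}$ (Lemma~\ref{appendix:subgraph}), and explicit boundary $3$-paths $P_k$ and $T_{k'}$ that extend the inherited walk $W^{n-6}_{g-1,g}$ to $W^{n}_{g-1,g}$. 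Your plan acknowledges that coordinate-level work is needed but supplies no mechanism for it --- no induction on $n$ and no general formula for the spine --- so the core construction is missing.

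The second gap is the inductive step on $k$. Your hypothesis --- that $H^n_{0,\ldots,k-1}$ has \emph{some} spanning lobster with prescribed ends --- is too weak to support the grafting. A vertex of the new layer $\s_k$ has only forced candidate attachment points (the blocks obtained by shifting one coordinate by a multiple of $3$), and for the lobster property to survive, at least one of these must lie at distance at most $1$ from the spine; nothing in your hypothesis prevents all of them from being feet, which would strand the new vertex at distance $3$. Repairing this requires strengthening the induction hypothesis with structural information about which vertices lie on or next to the spine, which in practice means writing down the explicit walk anyway. Note that the paper avoids stacked grafts entirely: it partitions the layers into consecutive pairs, builds a self-contained $(Z_1,Z_2)$-lobster in each pair honeycomb $H^n_{g-1,g}$ (each pair being an isomorphic copy of the same construction), and concatenates these end-to-end using the adjacency between $Z_2=\{-2g_1+3g_2,g_1,4g_1-3g_2\}$ of one pair and $Z_1$ of the next; only when $\l/2$ is odd is a single layer grafted on, and that graft attaches each $\s_{g+1}$-vertex to its neighbor $\{\alpha,\beta,\gamma-3\}_g$ inside the explicitly known walk $W^n_{g-1,g}$, not onto an unspecified lobster. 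So your decomposition (one layer at a time, induction on $k$) is genuinely different from the paper's (pairs of layers, induction on $n$), but as proposed it is a plan whose two hard steps are deferred rather than solved; matching endpoint arithmetic is evidence of consistency, not a proof that the walk exists.
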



\begin{proof}
Let $Z_1, Z_2 \in V(H_{g_1,g_2,\ldots g_k}^{n})$ where $Z_1=\{3g_1-2g_2,g_2,-3g_1+4g_2\}$ and $Z_2=\{-2g_1+3g_2,g_1,4g_1-3g_2\}$. Suppose we found a $(Z_1,Z_2)$-Hamilton walk through each of the following subgraphs of $H^n_{0,1, \ldots,\frac{\l}{2}-1}$:

\[H^n_{g-1,g}\mbox{ for each }g \in \{1,3,\ldots,\frac{\l}{2}-1\} \;\;\;\;\; \text{if $\frac{\l}{2}$ is even}\]
or
\[
H^n_{g-1,g}\mbox{ for each }g \in \{1,3,\ldots,\frac{\l}{2}-4\} \mbox{ and } H^n_{\frac{\l}{2}-3,\frac{\l}{2}-2,\frac{\l}{2}-1}\;\;\;\;\; \text{if $\frac{\l}{2}$ is odd}.
\]

Note that if $Z_2 \in V(H^n_{g-1,g})$ and $Z_1 \in V(H^n_{g+1,g+2})$ then $Z_2$ is adjacent to $Z_1$ in $H^n_{0,1, \ldots,\frac{\l}{2}-1}$.
Let $W^{n}_{g_1,g_2, \ldots, g_k}$ be the subgraph of $H^n_{g_1,g_2, \ldots, g_k}$ induced by the edges of the Hamilton walk through $H^n_{g_1,g_2,\ldots g_k}$. Then we obtain a $(Z,Y)$-Hamilton walk through $H^n_{0,1, \ldots,\frac{\l}{2}-1}$ by concatenating the following walks. 

\[
W^{n}_{0,1,\ldots, \frac{\l}{2}-1}=\begin{cases}
W^{n}_{0,1} \circ W^n_{2,3} \circ \dots \circ W^n_{\frac{\l}{2}-2,\frac{\l}{2}-1} \mbox{ if } \l \mbox{ is even}\\
W^n_{0,1} \circ W^n_{2,3} \circ \dots \circ W^n_{\frac{\l}{2}-5,\frac{\l}{2}-4} \circ  W^n_{\frac{\l}{2}-3,\frac{\l}{2}-2,\frac{\l}{2}-1} \mbox{ if } \l \mbox{ is odd}\\
\end{cases}
\]

We proceed by finding a $(Z_1,Z_2)$-Hamilton walk $W^n_{g-1,g}$ through the honeycomb graph $H^n_{g-1,g}$ by induction on $n$.
For $n=11$ and any $g$, the corresponding honeycomb graph $H^{11}_{g-1,g}$ is isomorphic to the one given in Figure~\ref{hex11path}. Label the vertices in this honeycomb graph as in Figure~\ref{hex11path}. 
\begin{figure}
 \begin{minipage}{.45\textwidth}
  \centering
	\mbox{}
  \vspace{.3in}
	\includegraphics[scale=1]{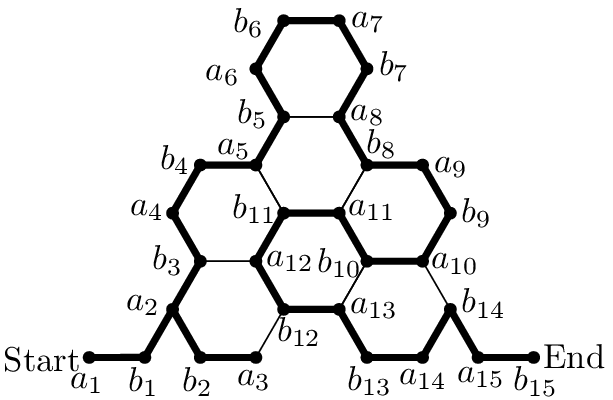}
\caption{Walk used in $2$-BIG formed from blocks in Step $1a$ in Construction~\ref{sblocks} when $n=11$}\label{hex11path}
 \end{minipage}
\hspace{.2in}
 \begin{minipage}{.45\textwidth}
  \centering
	\includegraphics[scale=1]{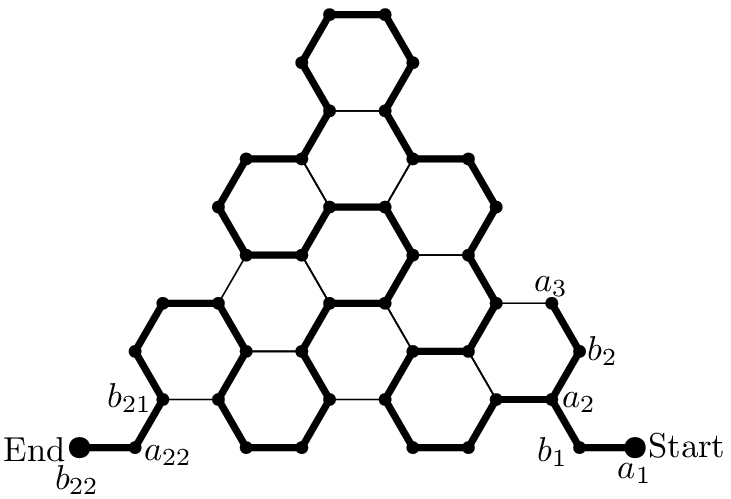}
\caption{Walk used in $2$-BIG formed from blocks in Step $1a$ in Construction~\ref{sblocks} when $n=13$}\label{hex13path}
 \end{minipage}
\end{figure}
Referring to Figure~\ref{hex11path}, define a walk on the honeycomb graph by 
\[W^{11}_{g-1,g}=(a_1,b_1,a_2,b_2,a_3,b_2,a_2,b_3,a_4,b_4,\ldots,a_{15},b_{15}).\]  
It is clear that the subgraph induced by the vertices of this walk is a lobster graph. Thus 
$W^{11}_{g-1,g}$ is a Hamilton walk. Figure~\ref{hex13path} gives the Hamilton walk, $H^{13}_{g-1,g}$.

By induction we may assume that for $v=2k+2$ where $k\equiv 1$ or $5\pmod{6}$ and $k<n$, there exists
a $(Z_1,Z_2)$-Hamilton walk through the honeycomb graph $H^k_{g-1,g}$ for all $g\in\{1,3,5,\ldots,\frac{\lambda}{2}-1\}$. 
First, we will show that $H^{n-6}_{g-1,g}$ (which has a Hamilton walk by the induction hypothesis) is a subgraph of $H^n_{g-1,g}$. 
Then we will show that $H^n_{g-1,g}$ has a $(Z_1,Z_2)$-Hamilton walk.

We denote  the vertices of $H^n_{g-1,g}$ by $U_{i,j}$. 
Recall that each vertex in $H^n_{g-1,g}$ is a triple from Step $1a$ of
Construction~\ref{sblocks}; we define these vertices as follows.
For each $i\in\{0,1,\ldots,(n-7)/3\}$ and each $j\in\{1,2,\ldots,3i+4\}$, 
when $n\equiv 1\pmod{6}$ let
\[
U_{i,j}=\begin{cases}
\left\{g+3i+5,\frac{1}{2}(2g-3j+6i+7),\frac{1}{2}(2g+3j-12i-23)\right\}_{g-1} & \mbox{if $j\equiv 1\pmod{2}$,}\\
\left\{g+3i+5,\frac{1}{2}(2g-3j+6i+10)\frac{1}{2}(2g+3j-12i-20)\right\}_g & \mbox{if $j\equiv 0\pmod{2}$.} \\
\end{cases}
\]
For each $i\in\{0,1,\ldots,(n-8)/3\}$ and each $j\in\{1,2,\ldots,3i+5\}$, when $n\equiv 5\pmod{6}$, let
\[
U_{i,j}=\begin{cases}
\left\{\frac{1}{2}(2g-3j+12i+25),\frac{1}{2}(2g+3j-6i-11),g-3i-7\right\}_{g} & \mbox{if $j\equiv 1\pmod{2}$,}\\ 
\left\{\frac{1}{2}(2g-3j+12i+22),\frac{1}{2}(2g+3j-6i-14),g-3k-7\right\}_{g-1} & \mbox{if $j\equiv 0\pmod{2}$,}\\
\end{cases}
\]
Let $U_{0,5}=\{g+2,g-1,g-4\}_{g-1}$ for $n\equiv 1\pmod{6}$. Let $U_{0,6}=\{g+5,g-1,g-4\}_{g}$ and $U_{0,7}=\{g+2,g-1,g-4\}_{g_1}$ for $n\equiv 5\pmod{6}$. 
For each $i,j$, let $\overline{U}_{i,j}=\{a',b',c'\}$ be a vertex where $U_{i,j}=\{a,b,c\}$ and $a'=2g-1-a$, $b'=2g-1-b$, $c'=2g-1-c$. 
Two vertices $U_{i_1,j_1}$ and $U_{i_2,j_2}$ are adjacent if $|U_{i_1,j_1}\cap U_{i_2,j_2}|=2$. It is clear that each element in the triples $U_{i,j}$ and $\overline{U}_{i,j}$ is an integer because of $j$'s parity.
Because the sum of the elements in each triple is either $3g-3$ or $3g$ modulo $n$, $U_{i,j}$ and $\overline{U}_{i,j}$ are indeed triples from Step $1a$. 
These adjacencies are illustrated in Figure~\ref{hex19}, which gives $H^{19}_{g-1,g}$ with the embedded $H^{13}_{g-1,g}$ in bold on the left.  On the right is $H^{17}_{g-1,g}$ with the embedded $H^{11}_{g-1,g}$ in bold. Figure~\ref{hex19} shows that $H^{n-6}_{g-1,g}$ is in fact a subgraph of $H^n_{g-1,g}$. 
In the appendix, we give a formal proof that the above description produces the honeycomb graph $H^{n-6}_{g-1,g}$ as a subgraph of $H^n_{g-1,g}$. 

\begin{figure}[htb]
\begin{center}
 \includegraphics[scale=.8]{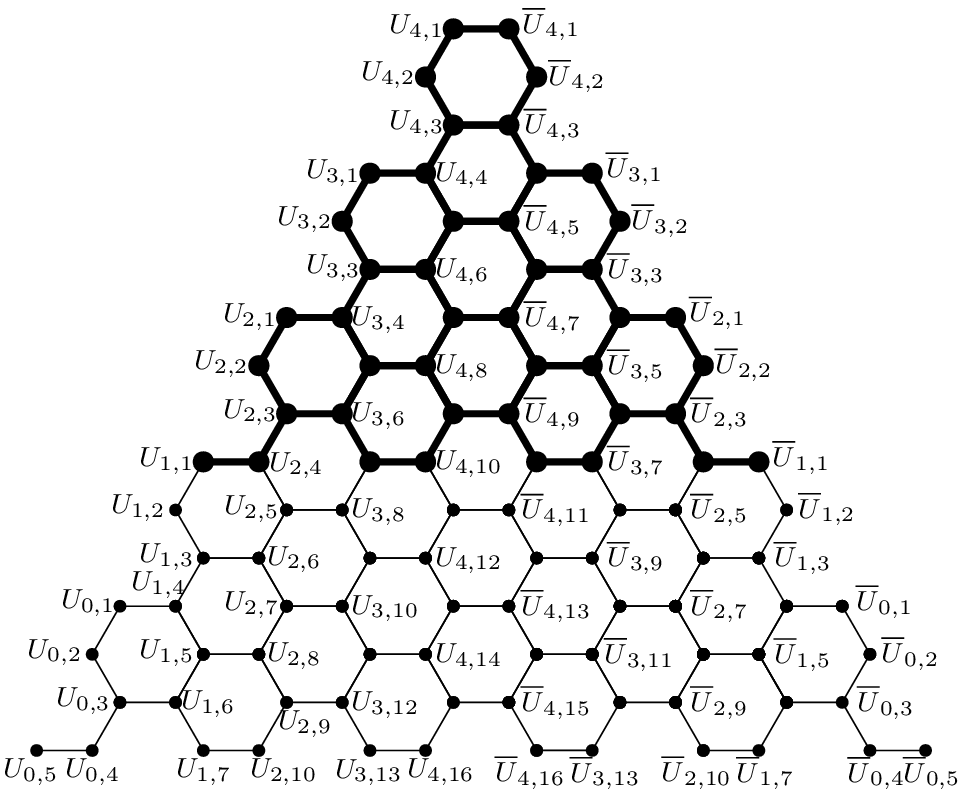}
 \hspace{.2in}
 \includegraphics[scale=.8]{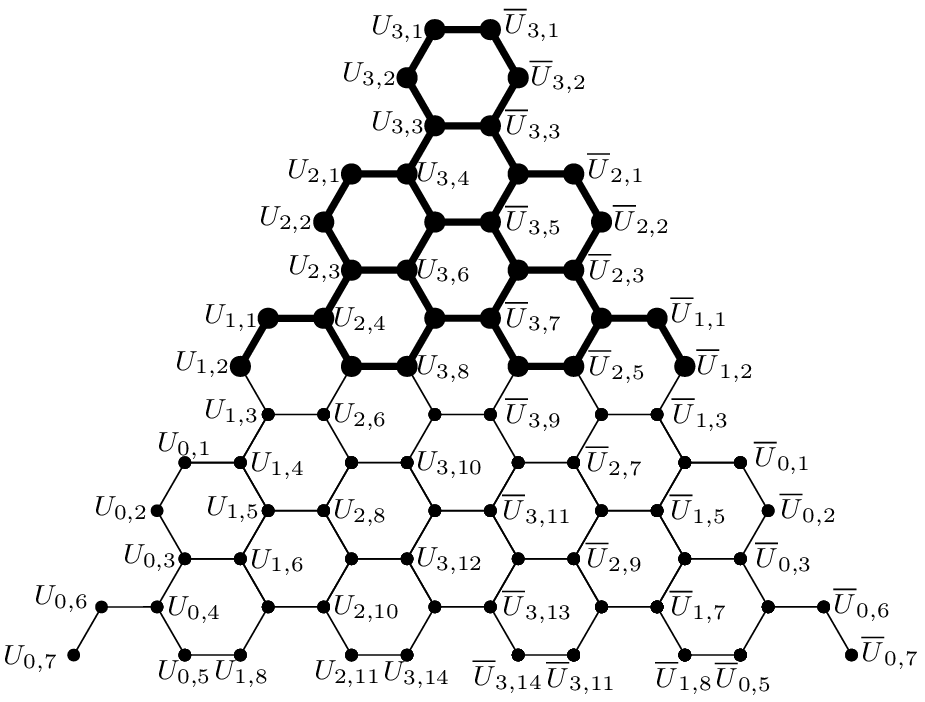}
\end{center}
\caption{Honeycomb graphs $H^{19}_{g-1,g}$ (left) and $H^{17}_{g-1,g}$ (right) with the embedded honeycomb graphs $H^{13}_{g-1,g}$ and $H^{11}_{g-1,g}$ respectively}\label{hex19}
\end{figure}

 Let $W^{n-6}_{g-1,g}$ be the Hamilton walk in $H^{n-6}_{g-1,g}$ from the induction hypothesis. 
 We will now form a Hamilton walk in $H^n_{g-1,g}$, by concatenating 3-paths and $W^{n-6}_{g-1,g}$.
If $n\equiv 1\pmod{6}$, for $k\in \{1,\ldots,\frac{2n-17}{3}\}$, we define the $k^{th}$ 3-path to be
\[
P_{k} = \begin{cases}
[U_{k,3k-1},U_{k,3k},U_{k,3k+1},U_{k,3k+2}] &\mbox{if $k$ is odd,} \\
[U_{k,3k+2},U_{k,3k+1},U_{k,3k},U_{k,3k-1}] &\mbox{if $k$ is even.} \\
\end{cases}
\]
If $n\equiv 5\pmod{6}$, for $k\in\{1,\ldots,\frac{2n-25}{3}\}$ we define the $k^{th}$ 3-path to be
\[
P_{k} = \begin{cases}
[U_{k,3k+3},U_{k,3k+2},U_{k,3k+1},U_{k,3k}] &\mbox{if $k$ is odd,} \\
[U_{k,3k},U_{k,3k+1},U_{k,3k+2},U_{k,3k+3}] &\mbox{if $k$ is even.} \\
\end{cases}
\]

\begin{figure}[htb]
\begin{center}
  \includegraphics[scale=.8]{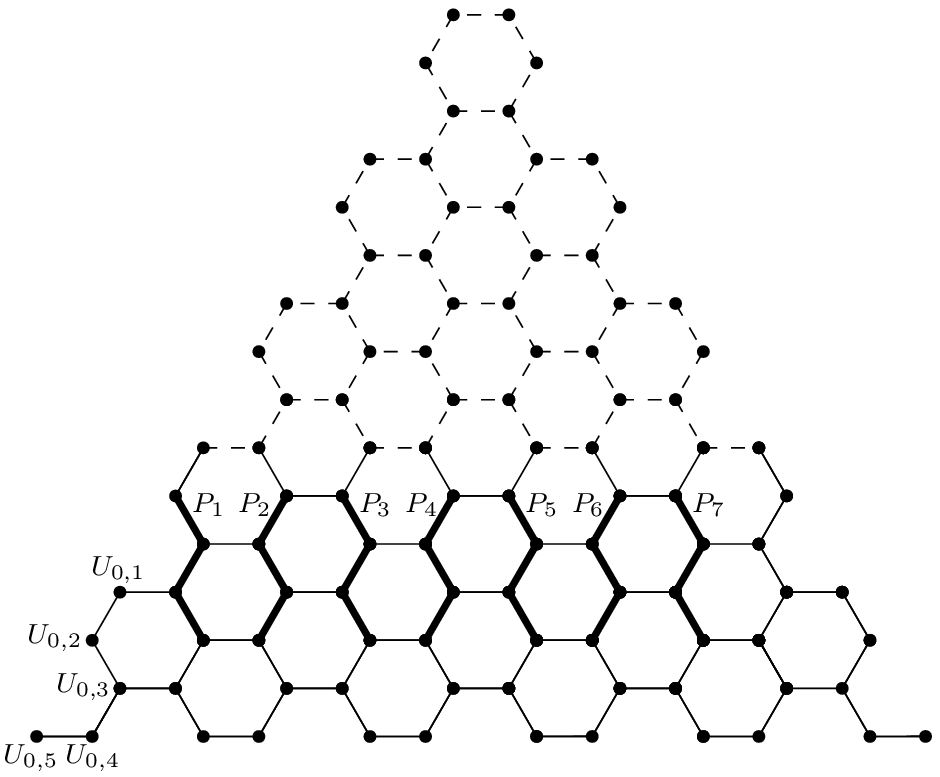}
 \hspace{.2in}
 \includegraphics[scale=.8]{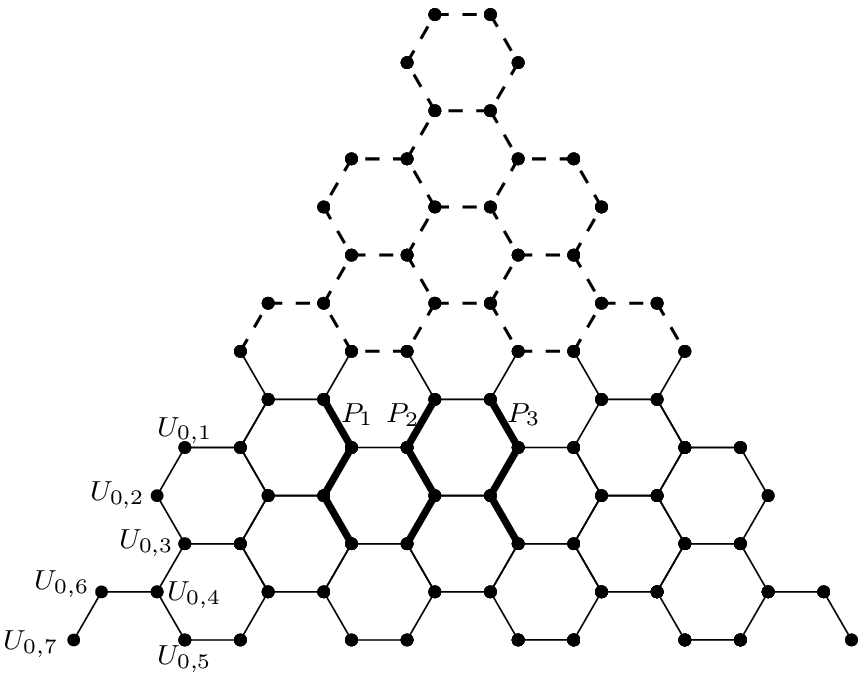}
\end{center}
\caption{Paths $P_i$ are highlighted in $n\equiv 1\pmod{6}$ case ($n=19$ on left) and in $n\equiv 5\pmod{6}$ case ($n=17$ on right)}\label{hex19pathP}
\end{figure}

The paths $P_k$ are highlighted in Figure~\ref{hex19pathP}.
In a similar manner, we have the following $3$-paths for $k'\in\{1,\ldots,\frac{n-7}{3}\}$ when $n\equiv 1\pmod{6}$ and for $k'\in\{1,\ldots,\frac{n-11}{3}\}$ when $n\equiv 5\pmod{6}$:
\[
T_{k'}=\begin{cases}
[U_{2k'-2,6k'-3},U_{2k'-1,6k'},U_{2k'-1,6k'+1},U_{2k',6k'+4}] &\mbox{if $n\equiv 1\pmod{6}$,} \\
[U_{2k'-1,6k'+1},U_{2k',6k'+4},U_{2k',6k'+5},U_{2k'+1,6k'+8}] &\mbox{if $n\equiv 5\pmod{6}$.} \\
\end{cases}
\]

\begin{figure}[htb]
\begin{center}
  \includegraphics[scale=.8]{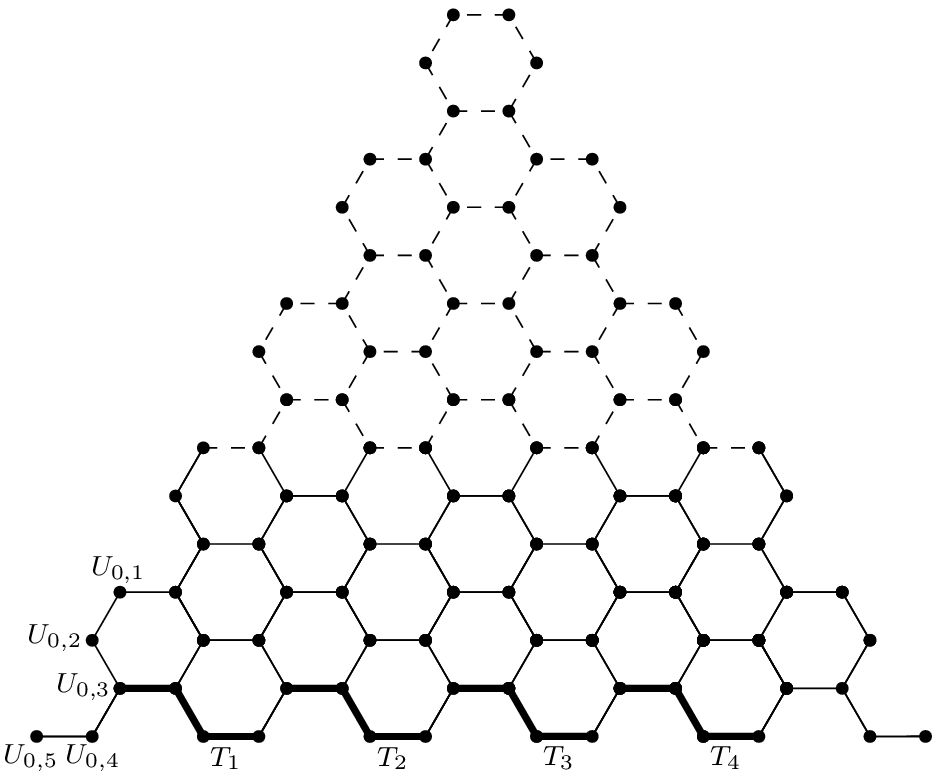}
 \hspace{.2in}
 \includegraphics[scale=.8]{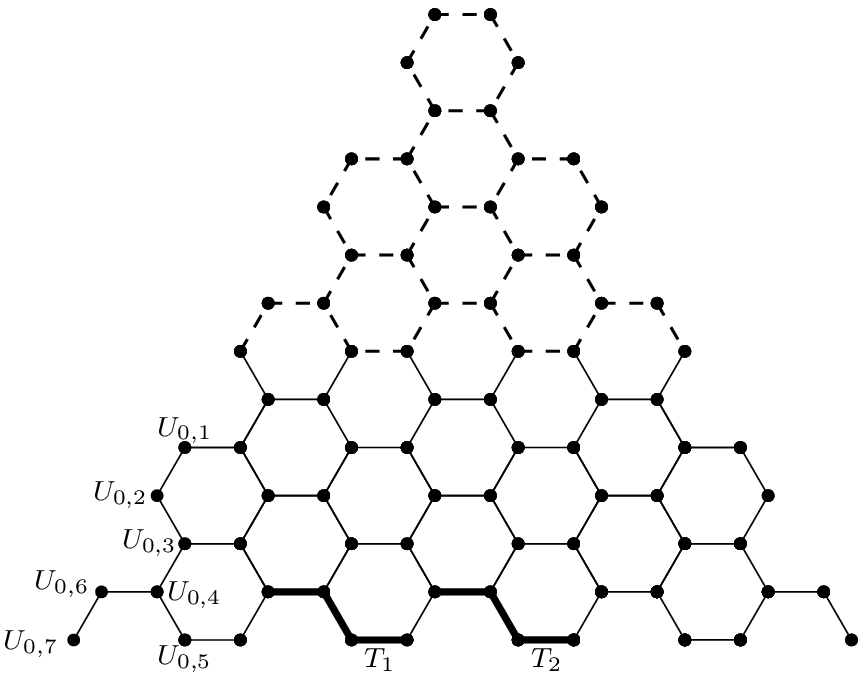}
\end{center}
\caption{Paths $T_i$ are highlighted in $n\equiv 1\pmod{6}$ case ($n=19$ on left) and in $n\equiv 5\pmod{6}$ case ($n=17$ on right)}\label{hex19pathQ}
\end{figure}

These paths are highlighted in Figure~\ref{hex19pathQ}.
Then we can define $W^{n}_{g-1,g}$ to be the following Hamilton walk. (The parts of this walk not including $W^{n-6}_{g-1,g}$ are highlighted in Figure~\ref{hex19fullPath}.)

\[
W^{n}_{g-1,g}=\begin{cases}
[U_{0,5},U_{0,4},U_{0,3},U_{0,2},U_{0,1},U_{0,2}]\circ T_1 \circ T_2\circ \cdots \circ T_{(n-7)/3} \circ [\overline{U}_{1,6},\overline{U}_{1,5}] \circ  & \\
P_{(2n-17)/3} \circ \cdots \circ P_2 \circ P_1 \circ W^{n-6}_{g-1,g} \circ[\overline{U}_{1,2},\overline{U}_{1,3},\overline{U}_{1,4},\overline{U}_{0,1},\overline{U}_{0,2},\overline{U}_{0,3},\overline{U}_{0,4},\overline{U}_{0,5}] &\mbox{if $n\equiv 1\pmod{6}$,} \\
& \\
[U_{0,7},U_{0,6},U_{0,4},U_{0,5},U_{1,8},U_{0,5},U_{0,4},U_{0,3},U_{0,2},U_{0,1},U_{1,4},U_{1,5},U_{1,6}]\circ T_1\circ & \\
 T_2 \circ \cdots \circ T_{(n-11)/3} \circ[\overline{U}_{2,10},\overline{U}_{2,9}] \circ P_{(2n-25)/3}\circ \cdots \circ P_2\circ P_1 \circ [U_{1,3}]\circ W^{n-6}_{g-1,g}\circ &\mbox{if $n\equiv 5\pmod{6}$.} \\
[\overline{U}_{1,3},\overline{U}_{2,6},\overline{U}_{2,7}, \overline{U}_{2,8},\overline{U}_{1,5},\overline{U}_{1,4}, \overline{U}_{0,1},\overline{U}_{0,2},\overline{U}_{0,3},\overline{U}_{1,6},\overline{U}_{1,7},\overline{U}_{1,8},\overline{U}_{0,5},\overline{U}_{0,4},& \\
\overline{U}_{0,6},\overline{U}_{0,7}] & \\
\end{cases}
\]
\begin{figure}[htb]
\begin{center}
  \includegraphics[scale=.8]{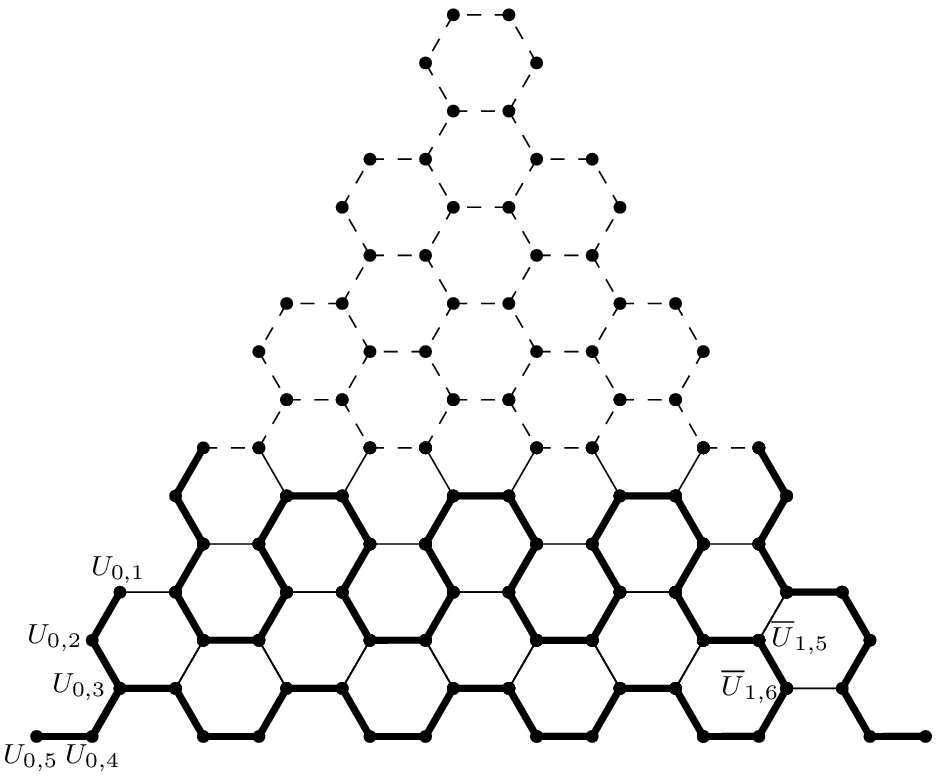}
  \hspace{.2in}
  \includegraphics[scale=.8]{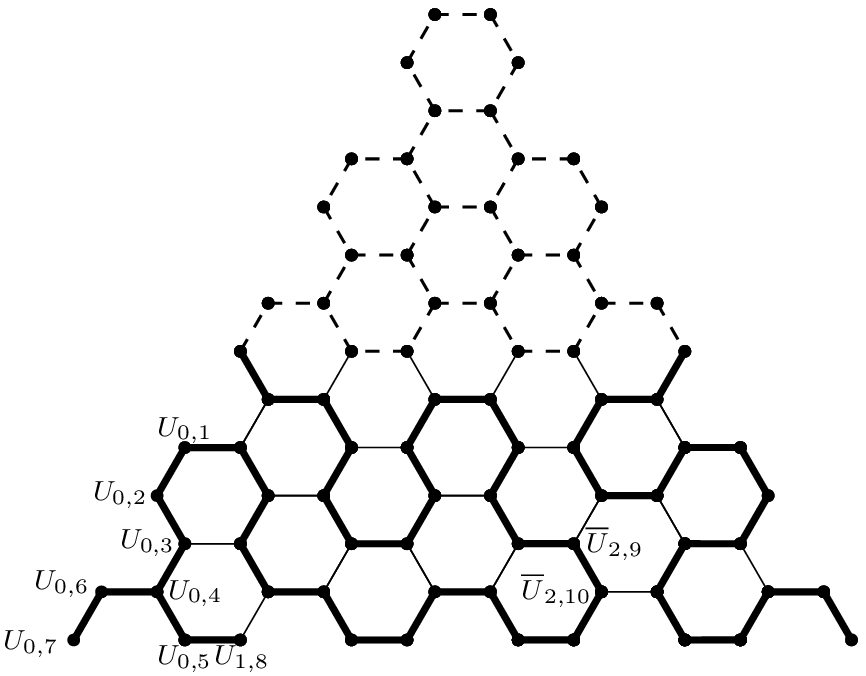}
\end{center}
\caption{Walk taken during induction step for $n\equiv 1\pmod{6}$ ($n=19$ on left) and $n\equiv 5\pmod{6}$ ($n=17$ on right)}\label{hex19fullPath}
\end{figure}

\noindent Thus $W^{n}_{g-1,g}$ is a $(Z_1,Z_2)$-Hamilton walk in $H^n_{g-1,g}$.

Finally, we must show that there is a $(Z_1,Z_2)$-Hamilton walk through $H^n_{\frac{\l}{2}-3,\frac{\l}{2}-2,\frac{\l}{2}-1}$. Let $g=\frac{\l}{2}-2$; then there is a Hamilton walk through $H^{n}_{g-1,g}$. We will show there is also a Hamilton walk through $H^{n}_{g-1,g,g+1}$.
Suppose $\{\alpha,\beta,\gamma\}_{g+1} \in \s_{g+1}$ is a triple obtained by Step $1a$. Then $\alpha+\beta+\gamma=3(g+1)$. But then $\alpha+\beta+(\gamma-3)=3g$ and so $\{\alpha,\beta,\gamma-3\}_{g} \in \s_{g}$. Furthermore, these two triples correspond to adjacent vertices in $H^n_{g-1,g,g+1}$. Thus any vertex $v \in V(H^n_{g-1,g,g+1})$ corresponding to a triple from $\s_{g}$ is adjacent to some vertex $v' \in V(H^n_{g-1,g,g+1})$ corresponding to a triple from $\s_{g+1}$. 
Suppose the  $(Z_1,Z_2)$-Hamilton walk through $H^n_{g-1,g}$ is  $(Z_1,\ldots,v,\ldots,Z_2)$ where $v$ is the vertex corresponding to $\{\alpha,\beta,\gamma-3\}_{g}$. 
Form a new walk which also contains $v'$: $(Z_1,\ldots, v, v',v, \ldots,Z_2)$.
We may follow the same process for each of the vertices in $V(H^n_{g-1,g,g+1})$ that correspond to triples from $\s_{g+1}$, thus creating a  $(Z_1,Z_2)$-Hamilton walk through $H^n_{g-1,g,g+1}$.

\end{proof}

Because $Z$ is an endpoint of the Hamilton walk given in Lemma~\ref{2hexagons}, removing $Z$ from the walk induces a Hamilton walk on the remaining graph. Let $H=H^n_{0,1, \ldots,\frac{\l}{2}-1} \backslash Z$; we have the following corollary.

\begin{corollary}
\label{HamWalk}
$H$ contains a Hamilton walk.
\end{corollary}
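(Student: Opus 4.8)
The plan is to read the corollary off directly from Lemma~\ref{2hexagons}, so the only work is to justify that deleting the endpoint $Z$ from the Hamilton walk leaves a Hamilton walk on the smaller graph. By Lemma~\ref{2hexagons}, the graph $H^n_{0,1,\ldots,\frac{\l}{2}-1}$ admits a $(Z,Y)$-Hamilton walk $W=(x_1,x_2,\ldots,x_k)$ with $x_1=Z$; by definition this means that the subgraph induced by the edges of $W$ (with multiedges removed) is spanning and is a lobster graph, and $Z$ is one of its two endpoints. I would then delete the initial vertex to form $W'=(x_2,x_3,\ldots,x_k)$ and argue that $W'$ is an $(x_2,Y)$-Hamilton walk in $H=H^n_{0,1,\ldots,\frac{\l}{2}-1}\backslash Z$.

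First I would establish that $Z$ occurs in $W$ exactly once, namely as $x_1$, and that $Z$ is a leaf of the associated lobster. This is where I would appeal to the explicit form of $W$ built in the proof of Lemma~\ref{2hexagons}: $Z=Z_1$ is the first vertex of the leading walk $W^n_{0,1}$ in the concatenation (the vertex $a_1$ in the base case, and the initial vertex $U_{0,5}$ or $U_{0,7}$ in the inductive step), and it has degree one in the lobster, so the unique lobster-edge incident to $Z$ is $Z x_2$, traversed only at the very start of $W$.

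Given this, the deletion is clean. Since $Z$ appears only as $x_1$, the walk $W'=(x_2,\ldots,x_k)$ still visits every vertex of $H$, so it is spanning on $H$. Its edge-induced subgraph is obtained from the lobster of $W$ by removing the leaf $Z$ together with its single incident edge $Z x_2$; deleting a leaf from a lobster again yields a lobster, since the central path is either unchanged or shortened by one vertex and every surviving vertex remains within distance two of that path. Hence the edge set of $W'$ induces a spanning lobster on $H$, and $W'$ is an $(x_2,Y)$-Hamilton walk in $H$, which proves the corollary.

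The main (and essentially only) obstacle is the first paragraph's bookkeeping: one must confirm that $Z$ is a degree-one vertex that the walk enters exactly once at its start, so that removing it excises precisely the vertex $Z$ and one edge without disturbing the traversal of the remaining vertices or the lobster structure. Everything else is a routine consequence of the definition of a Hamilton walk and the stability of the lobster property under leaf deletion, so I would keep the argument short and lean entirely on the construction already exhibited in Lemma~\ref{2hexagons}.
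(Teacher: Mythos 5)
Your proposal is correct and follows essentially the same route as the paper, which simply observes that since $Z$ is an endpoint of the Hamilton walk from Lemma~\ref{2hexagons}, removing $Z$ from that walk induces a Hamilton walk on $H=H^n_{0,1,\ldots,\frac{\l}{2}-1}\backslash Z$. Your additional bookkeeping (that $Z$ occurs exactly once, as a leaf of the lobster, so deleting it preserves the spanning lobster structure) is a more careful justification of exactly the step the paper asserts without comment.
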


The subgraph of the $2$-BIG formed by taking one triple from Step $1a$ and its corresponding 7 triples from Step $1b$ of Construction~\ref{sblocks} is isomorphic to the 3-cube, $Q_{3}$ as shown in Figure~\ref{pointExplosion}.

\begin{figure}[htb]
\begin{center}
	\includegraphics[scale=1]{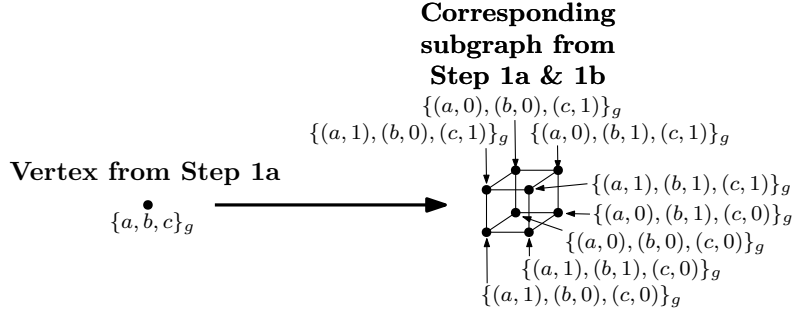}
\end{center}
\caption{Correspondence between a single vertex in Step $1a$ and 8 vertices in Step $1b$}\label{pointExplosion}
\end{figure}

The {\em cartesian product} $G_1 \mathbin{\square} G_2$ of graphs $G_1$ and $G_2$ is a graph whose vertex set is the 
cartesian product $V(G_1) \times V(G_2)$; and any two vertices $(u,u')$ and $(v,v')$ are adjacent in $G_1 \mathbin{\square} G_2$ if and only if either
\begin{itemize}
\item $u=v$ and $u'$ is adjacent with $v'$ in $G_2$, or
\item $u'=v'$ and $u$ is adjacent with $v$ in $G_1$.
\end{itemize}
Thus the subgraph of the $2$-BIG corresponding to Steps $1a$ and $1b$ from Construction~\ref{sblocks} is the cartesian product of 
$H^n_{0,1, \ldots,\frac{\l}{2}-1}$ and $Q_{3}$. Batagelj and Pisanski \cite{BP} have shown the following.
	
\begin{theorem}
\label{product}
{\normalfont\cite{BP}} Suppose $T$ is a tree and $M$ is a Hamiltonian graph with $|V(M)|=n$. If $\Delta(T) < n$, then $T \square M$ is Hamiltonian.
\end{theorem}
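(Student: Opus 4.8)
The plan is to reduce the statement to the special case $M = C_n$ (the $n$-cycle) and then to prove that case by a leaf-by-leaf induction on $|V(T)|$, splicing in one copy of the cycle at each step. First I would fix a Hamilton cycle $c$ of $M$ and observe that the spanning subgraph $T \square c$ of $T \square M$ — obtained by keeping every tree edge but only those edges of $M$ that lie on $c$ — is isomorphic to $T \square C_n$. Since a Hamilton cycle of a spanning subgraph is a Hamilton cycle of the whole graph, it suffices to prove that $T \square C_n$ is Hamiltonian whenever $\Delta(T) < n$. Write $C_n^{(t)}$ for the copy of the cycle indexed by $t \in V(T)$, call an edge of $T \square C_n$ a \emph{tree edge} if it joins $(t,x)$ to $(t',x)$ for some $tt' \in E(T)$, and an \emph{internal edge} of $C_n^{(t)}$ if it joins $(t,x)$ to $(t,y)$ with $xy \in E(C_n)$.

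For the induction I would carry a strengthened hypothesis that controls how a Hamilton cycle meets each copy, rather than merely asserting Hamiltonicity: I claim $T \square C_n$ has a Hamilton cycle $H$ that uses exactly $n - \deg_T(t)$ internal edges of $C_n^{(t)}$ — equivalently, exactly $2\deg_T(t)$ tree edges incident to $C_n^{(t)}$ — for every $t \in V(T)$. Phrased geometrically, the closed walk obtained by contracting each copy to a point traverses every edge of $T$ exactly twice. The hypothesis $\Delta(T) < n$ makes the count $n - \deg_T(t) \ge 1$ strictly positive, which is exactly what the splice below requires. The base case $|V(T)| = 1$ is immediate by taking $H = C_n^{(t)}$ itself.

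For the inductive step I would delete a leaf $\ell$ with neighbor $u$, set $T' = T - \ell$, and apply the hypothesis to $T'$ to obtain a Hamilton cycle $H'$ of $T' \square C_n$. Because $\deg_{T'}(u) = \deg_T(u) - 1 \le \Delta(T) - 1 \le n - 2$, the cycle $H'$ uses $n - \deg_{T'}(u) \ge 2$ internal edges of $C_n^{(u)}$; I pick one, say $\{(u,i),(u,i+1)\}$. Deleting this edge turns $H'$ into a Hamilton path of $T' \square C_n$ with endpoints $(u,i)$ and $(u,i+1)$; I then adjoin the two tree edges $\{(u,i),(\ell,i)\}$ and $\{(u,i+1),(\ell,i+1)\}$ and insert the Hamilton path of $C_n^{(\ell)}$ from $(\ell,i)$ to $(\ell,i+1)$ that runs the long way around, i.e. $C_n^{(\ell)}$ with the edge $\{(\ell,i),(\ell,i+1)\}$ removed. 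This closes up into a single Hamilton cycle $H$ of $T \square C_n$, and a short check restores the invariant: $C_n^{(\ell)}$ now contributes $n - 1 = n - \deg_T(\ell)$ internal edges, $C_n^{(u)}$ drops by exactly one to $n - \deg_T(u)$, the edge $u\ell$ is traversed exactly twice, and every other copy is untouched.

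The main obstacle is precisely guaranteeing that the splice vertex $u$ still has an internal edge of its copy available to cut. For an arbitrary Hamilton cycle of $T' \square C_n$ this can fail in principle: all $2n$ edge-slots of $C_n^{(u)}$ could be consumed by tree edges, leaving no internal edge to reroute. This is exactly why I propagate the precise internal-edge count as an invariant instead of bare Hamiltonicity — it pins the number of internal edges at every copy to $n - \deg_T(t)$, and the inequality $\deg_T(t) < n$ is the one input that keeps this quantity positive throughout the induction. Everything else (the isomorphism $T \square c \cong T \square C_n$, the verification that the spliced object is a single spanning cycle, and the edge-count bookkeeping at each step) is routine.
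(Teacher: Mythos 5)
Your proof is correct, but there is nothing in the paper to compare it against: Theorem~\ref{product} is not proved there; it is quoted from Batagelj and Pisanski \cite{BP} and used as a black box (the paper applies it with $T=W$, a spanning lobster of maximum degree $3$, and $M=Q_3$, $n=8$, to obtain Lemma~\ref{HamProduct}). What you have written is a correct, self-contained reconstruction of the cited result. Both of your steps check out: the reduction to $M=C_n$ via the spanning subgraph $T\square c$ is sound, and the leaf-deletion induction works precisely because of the strengthened invariant --- a Hamilton cycle of $T\square C_n$ using exactly $n-\deg_T(t)$ internal edges of each copy $C_n^{(t)}$ --- which is what guarantees a cuttable internal edge at the neighbor $u$ when the leaf copy is spliced in; your bookkeeping after the splice ($n-1$ internal edges at the leaf copy, one fewer at the copy of $u$, all other copies untouched) does restore the invariant. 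Two minor observations. First, your argument only ever needs one spare internal edge at $u$, i.e.\ $n-\deg_{T'}(u)\ge 1$, so it actually proves the sharper sufficiency $\Delta(T)\le n$; this is the tight threshold, since a cut-counting argument shows $T\square C_n$ cannot be Hamiltonian when $\Delta(T)>n$, and it is the form in which Batagelj and Pisanski state their theorem. Second, your geometric gloss that the contracted closed walk traverses every tree edge exactly twice is indeed a consequence of the invariant (a Hamilton cycle crosses each tree-edge cut an even number of times, hence at least twice, and your total count $\sum_t 2\deg_T(t)$ forces equality), but it is not used anywhere in the induction, so it could be omitted or flagged as a remark.
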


Let $W$ be the subgraph of $H$ that is induced by the vertices of the Hamilton walk obtained by Corollary~\ref{HamWalk}. Then it is clear that $W$ is
a tree and $\Delta(W)=3$. Thus since $Q_3$ is Hamiltonian, we have the following result.

\begin{lemma}
\label{HamProduct}
$W \square Q_{3}$ is Hamiltonian.
\end{lemma}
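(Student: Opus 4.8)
The plan is to apply Theorem~\ref{product} of Batagelj and Pisanski directly, since the two factors of the product already meet all of its hypotheses once the preceding corollary is in hand. First I would record the two structural facts about $W$. By Corollary~\ref{HamWalk}, the graph $H=H^n_{0,1,\ldots,\frac{\l}{2}-1}\backslash Z$ admits a Hamilton walk, and $W$ is the subgraph induced by the edges of that walk. By the definition of a Hamilton walk given at the start of Section~\ref{sec:v04mod6}, this induced subgraph (with multiedges removed) is a spanning lobster graph. Since every lobster graph is a tree, $W$ is a tree; and since each branch vertex of the lobster carries at most one pendant beyond its two neighbours along the central path, $\Delta(W)=3$.

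Next I would instantiate Theorem~\ref{product} with $T=W$ and $M=Q_3$. The $3$-cube $Q_3$ is Hamiltonian and has $|V(Q_3)|=8$. The only remaining hypothesis is the degree condition $\Delta(T)<|V(M)|$, which here reads $\Delta(W)=3<8$ and holds at once. Theorem~\ref{product} then gives that $T\square M = W\square Q_3$ is Hamiltonian, which is precisely the statement of the lemma.

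In effect the lemma is an immediate consequence of the cited product theorem as soon as $W$ is known to be a tree of maximum degree $3$; the substantive work lies entirely upstream, in Lemma~\ref{2hexagons} and Corollary~\ref{HamWalk}, where the Hamilton walk—and hence the lobster/tree structure of $W$—was produced. There is no real obstacle at this stage: the sole hypothesis of Theorem~\ref{product} that is not self-evident is the bound $\Delta(W)=3$, so the one point I would be careful to justify (rather than merely assert) is that the induced subgraph of the walk genuinely has maximum degree below $|V(Q_3)|=8$. Given the lobster structure this is automatic, and the conclusion follows.
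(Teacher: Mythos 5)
Your proposal is correct and follows essentially the same route as the paper: both cite Corollary~\ref{HamWalk} to get the spanning lobster subgraph $W$, note that $W$ is a tree with $\Delta(W)=3<8=|V(Q_3)|$, and invoke Theorem~\ref{product} with the Hamiltonicity of $Q_3$. One small caution: your stated reason for $\Delta(W)=3$ (that lobsters have at most one pendant per branch vertex) is not true of lobster graphs in general --- the bound really comes from inspecting the specific walks built in Lemma~\ref{2hexagons}, though any bound below $8$ suffices for the theorem, so the conclusion stands.
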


Because $W$ spans the vertices of $H$, we have a Hamilton cycle through the subgraph of the 2-BIG that is formed from the blocks of the $\TS(v,\l)$ that are given in Steps $1a$ and $1b$ of Construction~\ref{sblocks}, except the block $Z=\{n-2,1,4\}$.

\section{Subgraph Corresponding to Steps $1c$, $2a$, $2b$, and $3$}\label{path_2}

In this section we show that there exists a Hamilton path through the portion of the 2-BIG of $\ts(v,\lambda)$ that is formed from the blocks of the $\TS(v,\lambda)$ that are given in Steps $1c$, $2a$, $2b$, and 3 of Construction~\ref{sblocks}.

It is well known that any integer can be represented using negative bases. 
Negative bases were first studied by Vittorio Gr\"{u}nwald in 1885. For the purposes of our construction, we only care about one particular negative base.

\begin{theorem}\label{integersum}
Let $s$ be any integer. Then $s$ can be written as
\[
s=\sum_{k=1}^\infty (-2)^{k-1}y_k
\]
where $y_k\in \{0,1\}$ for each $k\geq 1$. 
\end{theorem}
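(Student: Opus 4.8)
The plan is to recognize Theorem~\ref{integersum} as the statement that every integer admits a \emph{negabinary} expansion (base $-2$) with digits in $\{0,1\}$, and to prove it by an explicit division-with-remainder step combined with induction on $|s|$. Since the right-hand side must converge to an integer, the real content is that $s$ can be written with only finitely many nonzero $y_k$; the remaining coefficients are then set to $0$ to match the stated infinite-sum form.

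First I would extract the least-significant digit by parity. Given $s$, let $y_1\in\{0,1\}$ be the residue of $s$ modulo $2$, so that $s-y_1$ is even. Then
\[
s = y_1 + (-2)\,s', \qquad s' := \frac{s-y_1}{-2} \in \Z,
\]
and the problem reduces to representing the integer $s'$ and prepending the digit $y_1$. Concretely, if $s' = \sum_{k\ge 1}(-2)^{k-1}y_{k+1}$, then $s = \sum_{k\ge 1}(-2)^{k-1}y_k$, so a representation of $s'$ immediately yields one of $s$.

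The key step is to show this recursion terminates, i.e.\ that iterating $s\mapsto s'$ eventually reaches $0$, after which all remaining digits are $0$. I would establish the magnitude estimate $|s'|<|s|$ whenever $|s|\ge 2$: for $s\ge 2$ one has $|s'| = (s-y_1)/2 \le s/2 < s = |s|$, and for $s\le -2$ one has $|s'| = (|s|+y_1)/2 \le (|s|+1)/2 < |s|$. The residual small cases are handled directly by the maps $0\mapsto 0$, $1\mapsto 0$, and $-1\mapsto 1\mapsto 0$. Hence, starting from any $s$, the magnitude strictly decreases until it enters $\{-1,0,1\}$, from which $0$ is reached in at most two further steps.

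I would then formalize this as a strong induction on $|s|$: treat $s\in\{-1,0,1\}$ as base cases (using that $s=-1$ reduces to $s'=1$, which is itself a base case), and for $|s|\ge 2$ invoke the inductive hypothesis on $s'$ since $|s'|<|s|$. This produces a representation with finitely many nonzero digits, and padding with zeros gives the stated form. The only delicate point---and the main obstacle---is the termination argument, because the per-step magnitude does \emph{not} strictly decrease for the small values (notably $s=-1$, where $|s'|=|s|=1$); isolating $\{-1,0,1\}$ as explicit base cases rather than relying on a uniform strict decrease is precisely what makes the induction go through.
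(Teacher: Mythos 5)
Your proof is correct. Note, however, that the paper itself offers no proof of Theorem~\ref{integersum} at all: it is stated as a classical fact about negative-base (negabinary) representations, with only a historical remark attributing the idea to Gr\"{u}nwald (1885). So there is nothing in the paper to compare your argument against step by step; what you have done is supply the standard self-contained proof that the authors take for granted. Your argument is the usual division-with-remainder algorithm: extract $y_1 \equiv s \pmod{2}$, recurse on $s' = (s-y_1)/(-2)$, and prove termination by the estimate $|s'| < |s|$ for $|s| \ge 2$. Both magnitude bounds check out (for $s \ge 2$, $|s'| = (s-y_1)/2 < s$; for $s \le -2$, $|s'| = (|s|+y_1)/2 \le (|s|+1)/2 < |s|$), and you correctly identify the one genuine subtlety --- that the decrease fails at $s = -1$, where $s' = 1$ --- and resolve it by making $\{-1,0,1\}$ explicit base cases, with $-1 = 1 + (-2)\cdot 1$ handled through the chain $-1 \mapsto 1 \mapsto 0$. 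The digit-shifting identity $s = y_1 + (-2)s'$ translating a representation of $s'$ into one of $s$ is also verified correctly. In short: the proposal is a complete and correct proof of a statement the paper merely cites, and it would make the paper self-contained on this point.
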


The {\em underlying graph} $H$ of a directed graph $H'$ is a graph that has the same vertex set and edge set as $H'$, but all of the edges in $H$ have no direction. A directed path in $H'$ is a path with the property that for each pair of consecutive arcs $e_1$ and $e_2$,  the head of $e_1$ is adjacent to the tail of $e_2$. We define $G\cup H$ as the union of $G$ and $H$, not necessarily vertex disjoint. We say that a directed graph is \textit{strongly connected} if there exists a directed path from any vertex to any other vertex.

\begin{lemma}\label{digraphConnected}
Let $g\in \Z_n$ for $n\equiv 1$ or $5\pmod{6}$. Let $D_g'$ be a directed graph with vertex set $\{0,\ldots,n-1\}$ such that there is a directed arc from $i+g$ to $-2i+g$  
for each $i\in \Z_n$ where calculations are done modulo $n$. Then $D_g' \cup D_{g+1}'$ is strongly connected.
\end{lemma}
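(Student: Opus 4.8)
The plan is to prove strong connectivity directly: I will show that for any two vertices $x_0,y\in\Z_n$ there is a directed walk (hence a directed path) from $x_0$ to $y$ in $D_g'\cup D_{g+1}'$. First I would rewrite the two arc sets as functions. Indexing the arc ``from $i+g$ to $-2i+g$'' by its tail $x=i+g$, the unique out-arc of $x$ in $D_g'$ goes to $\sigma_g(x):=-2x+3g$, and the unique out-arc of $x$ in $D_{g+1}'$ goes to $\sigma_{g+1}(x):=-2x+3g+3$. Since $n$ is odd, $2$ and hence $-2$ is a unit modulo $n$, so each $\sigma_{g+\epsilon}$ is a bijection and every vertex has in- and out-degree exactly $1$ in each digraph. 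Consequently a directed walk of length $k$ from $x_0$ is exactly the data of bits $\epsilon_1,\dots,\epsilon_k\in\{0,1\}$ together with the recursion $x_j=-2x_{j-1}+3(g+\epsilon_j)$, and conversely every choice of bits yields a legal walk.

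Next I would unwind this recursion. A routine induction, after summing the geometric series $\sum_{j=1}^{k}(-2)^{k-j}=\bigl(1-(-2)^k\bigr)/3$ (here $3$ is invertible mod $n$ because $\gcd(n,3)=1$), gives
\[
x_k=(-2)^k(x_0-g)+g+3\sum_{m=0}^{k-1}(-2)^m\delta_m,\qquad \delta_m:=\epsilon_{k-m}\in\{0,1\}.
\]
The factor $(-2)^k$ multiplying $x_0-g$ is the only thing preventing us from freely prescribing $x_k$, so the key maneuver is to neutralize it: as $-2$ is a unit mod $n$ it has a finite multiplicative order $d$, and whenever $k$ is a multiple of $d$ we have $(-2)^k\equiv 1$, so the formula collapses to $x_k\equiv x_0+3\sum_{m=0}^{k-1}(-2)^m\delta_m\pmod n$.

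It then remains to choose the bits $\delta_m$ so that $3\sum_{m=0}^{k-1}(-2)^m\delta_m\equiv y-x_0\pmod n$, equivalently $\sum_m(-2)^m\delta_m\equiv 3^{-1}(y-x_0)\pmod n$. This is precisely where Theorem~\ref{integersum} enters. Choosing any integer $s$ with $s\equiv 3^{-1}(y-x_0)\pmod n$, the theorem writes $s=\sum_{m\ge 0}(-2)^m y_{m+1}$ with $y_{m+1}\in\{0,1\}$ and only finitely many nonzero terms, say all vanishing for $m\ge M$. I would then take $k$ to be any multiple of $d$ with $k\ge M$, set $\delta_m=y_{m+1}$ for $m<M$ and $\delta_m=0$ for $M\le m<k$; the padded zeros leave the sum unchanged, so $\sum_{m=0}^{k-1}(-2)^m\delta_m=s\equiv 3^{-1}(y-x_0)$ and therefore $x_k\equiv y\pmod n$. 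This exhibits a directed walk from $x_0$ to $y$; since $x_0,y$ were arbitrary, $D_g'\cup D_{g+1}'$ is strongly connected.

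I expect the main obstacle to be the bookkeeping that converts a walk of alternating arc-choices into the negative-base sum, together with the observation that the stray coefficient $(-2)^k$ must be killed by restricting $k$ to multiples of the order of $-2$. Once the displayed formula is reduced to $x_k\equiv x_0+3\sum(-2)^m\delta_m$, Theorem~\ref{integersum} completes the argument essentially mechanically; the remaining verifications (the induction for the closed form and the geometric-series evaluation) are routine.
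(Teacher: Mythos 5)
Your proposal is correct and follows essentially the same route as the paper's proof: encode the arcs as the affine maps $x\mapsto -2x+3(g+\epsilon)$, unwind a directed walk into a base-$(-2)$ expansion, neutralize the stray coefficient $(-2)^k$ by restricting $k$ to multiples of the order of $-2$ modulo $n$, and invoke Theorem~\ref{integersum} to realize the needed digit string. The only real difference is one of streamlining: by solving directly for $\sum_m(-2)^m\delta_m\equiv 3^{-1}(y-x_0)\pmod{n}$ you avoid the paper's reduction to the single increment $1\mapsto 2$ (followed by chaining) and its case split on $n\bmod 6$, of which the paper works out only $n\equiv 1\pmod 6$ and dismisses $n\equiv 5\pmod 6$ as ``similar,'' so your version is uniform in both residue classes and for general $g$.
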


\begin{proof}
Let $x$ be a vertex. Since all calculations will be identical for $D_g'$ as they are for $D_0'$, without loss of generality, let $g=0$. We define a mapping $\alpha: \Z_n\to \Z_n$ by $\alpha(x) = -2x$. 
Notice that this mapping represents the arcs in $D_0'$.
That is, in $D_0'$, there is a directed edge from $x$ to $-2x$ for each $x\in \Z_n$. 
Therefore, in $D_1'$, we can define the mapping $\beta: \Z_n\to \Z_n$ to represent the arcs in $D_1'$ by $\beta(x)= -2(x-1)+1=-2x+3$. Thus we must show that for any pair of vertices $\{x,y\}$, there is a directed path from $x$ to $y$. Equivalently, we must show there is a composition of some number of mappings $\alpha$ and $\beta$ that will send $x$ to $y$.

Notice that if we can show that any $x$ can be mapped to $x+1$ by a sequence of compositions of $\alpha$ and $\beta$, we can map $x$ to any $z$ by mapping $x$ to $x+1$, mapping $x+1$ to $x+2$, mapping $x+2$ to $x+3$, and so on. 
In this manner, we will form a directed walk. But since we could form a directed path from $x$ to $z$ from this directed walk, it is enough to show that there is a directed walk from $x$ to $x+1$ for any $x$. 
Without loss of generality, we may assume $x=1$. 
Thus we aim to show that we can map $1$ to $2$.
Represent the mapping from $x$ to $z$ as a composition of $t$ mappings, where each mapping is either $\alpha$ or $\beta$. So we define $z$ as a function of $x$:
\begin{align*}
z(x) &= -2(-2(\cdots(-2(-2(-2x+3y_{t-1})+3y_{t-2})+3y_{t-3})\cdots)+3y_1)+3y_0 \\
&= (-2)^t+3(-2)^{t-1}y_{t-1}+3(-2)^{t-2}y_{t-2}+\cdots + 3(-2)y_1+3y_0 \\
&= (-2)^tx-6\left(\sum_{k=1}^{t-1} (-2)^{k-1}y_k\right) + 3y_0
\end{align*}
where $y_0,y_1,\ldots,y_{t-1}$ are either $0$ or $1$, and
\[
y_i=
\begin{cases}
0 &\mbox{if $\alpha$ is applied, or} \\
1 & \mbox{if $\beta$ is applied.}
\end{cases}
\]
Because we assume $x=1$, we may choose $y_0=0$ and thus we map $1$ to $z$ by
\[
z(x)\equiv (-2)^t +(-6)\left(\sum_{k=1}^{t-1} (-2)^{k-1} y_k\right)\pmod{n}
\]
where $y_1,y_2,\ldots,y_{t-1}\in \{0,1\}$, $y_0=0$. Let $t$ be large enough to be a multiple of order of $-2$ in $\Z_n$; denote this order as $a$. We can say this without loss of generality since writing a base-$\ell$ number as $100$ is the same as writing this base-$\ell$ number as $000100$. 
Since $t$ is a multiple of $a$, we have:
\[
z(x)\equiv 1+(-6)\left(\sum_{k=1}^{t-1} (-2)^{k-1} y_k\right)\pmod{n}.
\]
Since $n\equiv 1\pmod{6}$, we can write $n=6s+1$ for some $s$, so $s=\frac{n-1}{6}$. By Lemma~\ref{integersum}, choose $y_1,y_2,\ldots,y_{t-1}$ so that 
\[
s=\sum_{k=1}^{t-1}(-2)^{k-1}y_{k-1}.
\]
Then we have
\[
z(x) \equiv 1 +(-6)s \\
\equiv 1+(-6)\left(\frac{n-1}{6}\right) \\
\equiv 2\pmod{n}.
\]
Since we only used mappings $\alpha$ and $\beta$, and not their inverses, the walk described using the mappings $\alpha$ and $\beta$ is directed and so the directed graph $D_g'\cup D_{g+1}'$ is strongly connected. A similar argument can be made when $n\equiv 5\pmod{6}$.
\end{proof}

Let $g \in \Z_n$ for $n\equiv 1$ or $5 \pmod{6}$.  Let $D'_{g}$ be the union of all directed graphs given in Construction~{\normalfont\ref{sblocks}} and $G_g$ be the induced subgraph of the $2$-BIG of the $\ts(v,2)$ formed by the blocks of $\s_g$ given in Steps $1c$, $2a$, $2b$, and $3$. By Construction~{\normalfont\ref{sblocks}}, each arc in $D_g'\cup D_{g+1}'$ corresponds to a subgraph of $G_g\cup G_{g+1}$ on $6$ vertices that is isomorphic to one of the graphs in Figure~\ref{doubleStar}, depending on the color of the arc applied to $D_g'\cup D_{g+1}'$ in Step $0$.
Note that the vertices of degree $1$ in either graph in Figure~\ref{doubleStar} are from either Step $2a$ or Step $2b$. Careful inspection of the construction shows that if two adjacent arcs $(a,b)$ and $(b,c)$ are in $D'_g\cup D'_{g+1}$, then the subgraphs of $G_g \cup G_{g+1}$ which correspond to these arcs are connected and form the subgraph in Figure~\ref{connectedDoubleStar} up to isomorphism, regardless of the color of the two arcs chosen in Step 0. Note that since we deal exclusively with directed trails, we will never have a pair of consecutive directed arcs in our directed trails of the form $\{(a,b),(c,b)\}$ or $\{(a,b),(a,c)\}$. Because there is a directed arc from $i+g$ to $-2i+g$ for each $i\in \Z_n$, we get that $D_g'\cup D_{g+1}'$ is strongly connected by Lemma~\ref{digraphConnected}.  Because $D_g'\cup D_{g+1}'$ is connected, it follows that $G_g \cup G_{g+1}$ is connected as well. Thus we have the following result.

\begin{lemma}\label{sg+sg+1connected}
Let $g\in \Z_n$ for $n\equiv 1$ or $5 \pmod{6}$. Let $G_g \cup G_{g+1}$ be the subgraph of the $2$-BIG of the $\ts(v,4)$ formed by the blocks of $\s_g$  and $\s_{g+1}$ given in Steps $1c$, $2a$, $2b$, and $3$ of Construction~{\normalfont\ref{sblocks}}.  Then $G_g\cup G_{g+1}$ is connected.
\end{lemma}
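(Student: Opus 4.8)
The plan is to transfer the strong connectivity of the arc digraph $D_g'\cup D_{g+1}'$, already established in Lemma~\ref{digraphConnected}, to the block intersection graph $G_g\cup G_{g+1}$ by means of the arc-to-gadget correspondence set up in the discussion preceding the lemma. To each non-loop arc $e$ of $D_g'\cup D_{g+1}'$ I would associate the set $\Gamma_e$ of six blocks it generates in Steps $1c$, $2a$, and $2b$ (the two Step~$1c$ blocks together with the four Step~$2a$ or $2b$ blocks); by the observation these six vertices induce a connected subgraph of $G_g\cup G_{g+1}$ isomorphic to one of the double stars of Figure~\ref{doubleStar}. Moreover, if $e=(a,b)$ and $e'=(b,c)$ are head-to-tail consecutive arcs, then the $\infty$-blocks $\{\infty_0,(a,0),(b,0)\}$ and $\{\infty_0,(b,0),(c,0)\}$ (or their blue analogues) meet in the two points $\{\infty_0,(b,0)\}$, so $\Gamma_e$ and $\Gamma_{e'}$ are joined by an edge and together form the configuration of Figure~\ref{connectedDoubleStar}.

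With these two facts recorded, the first step is to show that the union of all gadgets is connected. Given two blocks lying in gadgets $\Gamma_{e_1}$ and $\Gamma_{e_2}$, I would use strong connectivity to choose a directed walk that begins with $e_1$, ends with $e_2$, and whose intermediate arcs form a directed path from the head of $e_1$ to the tail of $e_2$. Consecutive arcs along this walk are head-to-tail adjacent, so the corresponding gadgets are pairwise edge-joined; concatenating a path inside each gadget with the connecting edges produces a walk in $G_g\cup G_{g+1}$ between the two chosen blocks. This already places every block coming from Steps $1c$, $2a$, and $2b$ into a single connected component.

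It then remains to attach the Step~$3$ blocks, and this is where working with the union $G_g\cup G_{g+1}$ rather than with $G_g$ alone becomes essential. The four Step~$3$ blocks of $\s_g$ all involve the fixed point $g$ of $\tau$ (equivalently, the loop of $D_g'$), the four Step~$3$ blocks of $\s_{g+1}$ all involve $g+1$, and none of these eight blocks belongs to any gadget. I would first note that $\{\infty_0,\infty_1,(g,0)\}$ and $\{\infty_0,\infty_1,(g+1,0)\}$ meet in $\{\infty_0,\infty_1\}$, so the two Step~$3$ clusters are mutually connected; internally, the four triples on the point set $\{\infty_0,\infty_1,(g,0),(g,1)\}$ pairwise meet in two points and hence induce a clique. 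The decisive observation is that, although $g$ is only a loop in $D_g'$, it is a genuine non-loop endpoint in $D_{g+1}'$: the arc from $g$ to $g+3$ lies in $D_{g+1}'$, so its Step~$2a$/$2b$ gadget in $G_{g+1}$ contains a block $\{\infty_0,(g,0),(g+3,0)\}$ (or a blue analogue) meeting the Step~$3$ block $\{\infty_0,(g,0),(g,1)\}$ of $\s_g$ in two points. Symmetrically, $g+1$ is a non-loop endpoint of an arc in $D_g'$, linking the Step~$3$ blocks of $\s_{g+1}$ to a gadget sitting in $G_g$.

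The hard part will be exactly this last step: verifying that each fixed point of one summand reappears as an honest arc endpoint of the other digraph, and checking the $\infty$-labels carefully enough that a genuine two-point intersection is produced in every case, whether the relevant arc was colored red or blue in Step~$0$. Once these cross-links are in place, the Step~$3$ blocks are joined to the connected gadget-union of the second step, and I would conclude that $G_g\cup G_{g+1}$ is connected.
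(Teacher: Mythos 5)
Your proposal is correct, and its core---associating to each non-loop arc of $D_g'\cup D_{g+1}'$ the six-block double-star gadget of Figure~\ref{doubleStar}, joining the gadgets of head-to-tail consecutive arcs as in Figure~\ref{connectedDoubleStar}, and then transferring the strong connectivity given by Lemma~\ref{digraphConnected}---is exactly the paper's argument, which appears as the paragraph preceding the lemma rather than as a formal proof. Where you genuinely differ is in the treatment of the Step~3 blocks. The paper's justification never mentions them: it attaches the two $K_4$'s only later, inside the proof of Lemma~\ref{hamPathSteps1c-3}, using precisely the fact you isolate, namely that $g$, while only a fixed point (loop) in $D_g'$, is the tail of a genuine arc $(g,g+3)$ of $D_{g+1}'$, whose blocks meet the Step~3 blocks of $\s_g$ in two points. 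Since the lemma as stated includes Step~3, your version is the more complete proof of the stated claim; the paper leaves that attachment implicit at this point. Two small remarks. First, your named pair of connecting blocks $\{\infty_0,(a,0),(b,0)\}$ and $\{\infty_0,(b,0),(c,0)\}$ only coexist when both arcs are red; in mixed-color cases one must take different representatives, e.g.\ $\{\infty_0,(a,0),(b,0)\}$ and $\{\infty_0,(b,0),(c,1)\}$ when $(a,b)$ is red and $(b,c)$ is blue---you flag exactly this case-check as remaining work, and it does succeed for each of the four color combinations. Second, the cross-link from the gadget of $(g,g+3)$ to the Step~3 clique of $\s_g$ can be made color-independent by using the Step~1c blocks $\{(g,0),(g+3,\epsilon),(g,1)\}$, $\epsilon\in\{0,1\}$, which meet $\{\infty_i,(g,0),(g,1)\}$ in the two points $\{(g,0),(g,1)\}$ regardless of the Step~0 coloring, slightly simplifying your ``hard part.''
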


\begin{figure}[htb]
\begin{center}
 \includegraphics[scale=1]{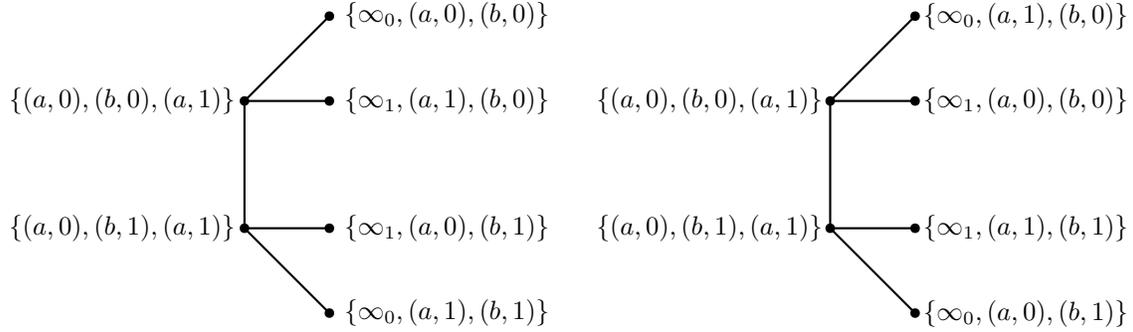}
\end{center}
\caption{Subgraph of $\ts(v,4)$ formed from either a single red arc (left) or blue arc (right) of $D_g$}\label{doubleStar}
\end{figure}

\begin{figure}[htb]
\begin{center}
 \includegraphics[scale=1]{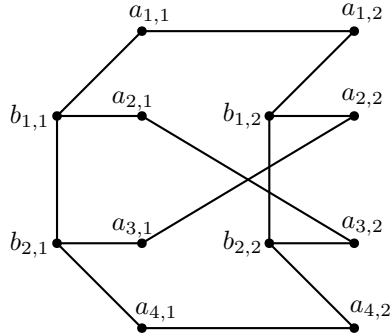}
\end{center}
\caption{Subgraph of 2-BIG of $\ts(v,4)$ formed from two adjacent arcs in $D_g$}\label{connectedDoubleStar}
\end{figure}

To build a Hamilton path in the $2$-BIG of $\ts(v,\l)$ formed by the blocks from Steps $1c$, $2a$, $2b$, and $3$, we begin by finding a Hamilton path through two consecutive arcs on a directed trail in the subgraph of the $2$-BIG of $\ts(v,\l)$ formed by the blocks in Steps $1c$, $2a$, and $2b$.

\begin{lemma}\label{etrail_ham}
Let $v=2n+2$ and $n\equiv 1$ or $5\pmod{6}$. Let $D'=\bigcup_{g\in\Z_{\l/2}} D_g'$ be the union of all directed graphs given in Construction~{\normalfont\ref{sblocks}}, and let $G$ be the subgraph of the $2$-BIG of $\TS(v,\l)$ formed by the blocks given in Steps $1c$, $2a$, and $2b$. Let $H$, a subgraph of $G$, be the graph formed by selecting two arcs from $D'$ that form a directed path $[e_1,e_2]$ in $D'$. 
There is a Hamilton path in $H$ that begins at a vertex in $H$ corresponding to $e_1$ in Step $2a$ or $2b$ and ends at a vertex in $H$ corresponding to $e_2$ in Step $2a$ or $2b$. 
\end{lemma}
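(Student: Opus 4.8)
The plan is to identify $H$ with a completely determined $12$-vertex graph and then simply read off a Hamilton path. Write $e_1=(a,b)$ and $e_2=(b,c)$, so that the head of $e_1$ is the tail of $e_2$. As recorded before Lemma~\ref{sg+sg+1connected} (Figure~\ref{doubleStar}), the six vertices contributed by $e_1$ form a double star: the two Step~$1c$ triples $c_1=\{(a,0),(b,0),(a,1)\}$ and $c_2=\{(a,0),(b,1),(a,1)\}$ are joined by an edge and serve as the two centers, while the four Step~$2a$/$2b$ triples $r_1,r_2,r_3,r_4$ are leaves, with $r_1,r_4$ adjacent to $c_1$ and $r_2,r_3$ adjacent to $c_2$. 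The same description applies to $e_2$, whose vertices I label $d_1,d_2$ (the Step~$1c$ centers) and $s_1,s_2,s_3,s_4$ (the Step~$2a$/$2b$ leaves), with $d_1$ adjacent to $s_1,s_4$ and $d_2$ adjacent to $s_2,s_3$.

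First I would pin down the edges joining the two double stars. Every leaf of $e_1$ has the form $\{\infty_*,(a,\cdot),(b,\cdot)\}$ and every leaf of $e_2$ the form $\{\infty_*,(b,\cdot),(c,\cdot)\}$, so a leaf of $e_1$ and a leaf of $e_2$ meet in two points exactly when they share both their $\infty$-point and a point $(b,\cdot)$; this pairs the leaves bijectively and produces four cross edges, which after fixing labels are $r_1s_1,\ r_2s_2,\ r_3s_4,\ r_4s_3$. A short check of the remaining pairs --- centers against centers, and centers against leaves of the opposite arc --- shows they never meet in two points, so these four cross edges are the only inter-arc adjacencies. Hence $H$ is precisely the graph of Figure~\ref{connectedDoubleStar}: the spanning cycle
\[
(r_1,c_1,r_4,s_3,d_2,s_2,r_2,c_2,r_3,s_4,d_1,s_1)
\]
together with the two chords $c_1c_2$ and $d_1d_2$.

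The conclusion is then immediate. Deleting the cross edge $r_1s_1$ from this spanning cycle yields the Hamilton path
\[
(r_1,c_1,r_4,s_3,d_2,s_2,r_2,c_2,r_3,s_4,d_1,s_1),
\]
whose endpoints $r_1$ and $s_1$ are a Step~$2a$/$2b$ vertex of $e_1$ and a Step~$2a$/$2b$ vertex of $e_2$, exactly as the statement demands. In fact any of the four cross edges could be removed, so either leaf-endpoint may be chosen freely --- a flexibility worth recording, since these paths will later be spliced together along a directed trail. Because the identification with Figure~\ref{connectedDoubleStar} holds regardless of the colors assigned to $e_1,e_2$ in Step~$0$, and the governing isomorphism carries Step~$1c$ vertices to centers and Step~$2a$/$2b$ vertices to leaves, this single path handles all color combinations at once.

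The proof carries essentially no difficulty beyond the adjacency bookkeeping, most of which is already done in the discussion preceding Lemma~\ref{sg+sg+1connected}; the one point that needs care --- and which I would expect to be the main (if minor) obstacle --- is verifying that the only edges between the two arcs are the four leaf--leaf cross edges, since this is what pins $H$ down to a cycle-plus-two-chords rather than something with extra adjacencies that could complicate the path.
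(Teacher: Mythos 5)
Your proposal is correct and takes essentially the same approach as the paper: the paper's proof also identifies $H$ with the graph in Figure~\ref{connectedDoubleStar} (leaning on the ``careful inspection'' discussion preceding Lemma~\ref{sg+sg+1connected}) and then exhibits an explicit Hamilton path between Step $2a$/$2b$ vertices of $e_1$ and $e_2$, which is exactly your spanning cycle with one cross edge deleted. Your adjacency bookkeeping is in fact more detailed than the paper's; the one assertion you leave implicit in ``after fixing labels'' (that the matching joins the two leaves of each center to leaves of two \emph{different} centers of the other arc, so the union is a single $12$-cycle rather than two disjoint $6$-cycles) is immediate, since the two Step $2a$/$2b$ triples of $e_2$ containing $(b,0)$ have distinct $\infty$-points and distinct $(c,\cdot)$-points.
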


\begin{proof}
Because the two arcs $e_1,e_2$ form a directed trail in $D'$, $H$ is isomorphic to the graph in Figure~\ref{connectedDoubleStar}.
A Hamilton path of the prescribed type is 
\[a_{1,1},b_{1,1}, a_{2,1} ,a_{3,2} ,b_{2,2} ,a_{4,2} ,a_{4,1} ,b_{2,1} ,a_{3,1} ,a_{2,2} ,b_{1,2} ,a_{1,2}.\]
\end{proof}

Let $D'=\bigcup_{g\in\Z_{\l/2}} D_g$ be the union of all directed graphs formed from Step 0 of Construction~\ref{sblocks}, and let $H$ be the induced subgraph of the $2$-BIG of a $\TS(v,\lambda)$ formed by the blocks given in Steps $1c$, $2a$, and $2b$. The following lemma shows that we can find a Hamilton path in the subgraph
of $H$ induced by a directed trail in $D'$ with both endpoints corresponding to the same arc in the trail.  

\begin{lemma}\label{part_etrail_ham}
Let $v=2n+2$ and $n\equiv 1$ or $5\pmod{6}$. Let $D'=\bigcup_{g\in\Z_{\l/2}}D_g$ be the directed graph given by Step 0 in Construction~\ref{sblocks}, and let $H$ be the subgraph of the $2$-BIG of $\TS(v,\l)$ formed by the blocks given in Steps $1c$, $2a$, and $2b$. Let $\widehat{H}$, a subgraph of $H$, be formed by selecting a directed trail of length $k \geq 3$, $T=[e_0,e_1,\ldots,e_{k-1}]$, from $D'$. Then there is a Hamilton path $P$ in $\widehat{H}$ that begins at a vertex in $\widehat{H}$ corresponding to $e_0$ (or $e_{k-1}$) in Step $2a$ or Step $2b$ and ends at a vertex in $\widehat{H}$ corresponding to $e_0$ (or $e_{k-1}$) in Step $2a$ or Step $2b$. 
\end{lemma}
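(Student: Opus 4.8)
The plan is to treat $\widehat H$ as an explicit graph and build the Hamilton path by hand. From the discussion preceding Lemma~\ref{etrail_ham} and from Figures~\ref{doubleStar} and~\ref{connectedDoubleStar}, $\widehat H$ is a chain of $k$ double stars $G_{e_0},\dots,G_{e_{k-1}}$, one per arc: each $G_{e_i}$ has two adjacent degree-$3$ \emph{centers} (the Step-$1c$ blocks), each bearing two \emph{leaves} (the Step-$2a$/$2b$ blocks), and consecutive double stars are glued by a perfect matching of their four leaves. The one structural fact I would extract first, and the only one I need, is independent of the Step-$0$ coloring: the matching sends the two leaves of each center to the two \emph{distinct} centers of the neighbouring double star, so between $G_{e_i}$ and $G_{e_{i+1}}$ every ``cherry'' (a center together with its two leaves) is joined to every cherry by a $K_{2,2}$ of matching edges. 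I will produce a Hamilton path whose ends are two leaves of $G_{e_0}$; the alternative (both ends in $G_{e_{k-1}}$) follows by reversing the trail $T$.

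Next I would record the forced behavior at the far end. Every leaf of $G_{e_{k-1}}$ has degree $2$ (its center, and one matching edge into $G_{e_{k-2}}$), and since both path-ends must lie in $G_{e_0}$, each such leaf is interior and uses both of its edges; this forces each center of $G_{e_{k-1}}$ to be joined to \emph{both} of its leaves. Hence the two cherries of $G_{e_{k-1}}$ are forced to appear as two U-turns, each running out of $G_{e_{k-2}}$, through a cherry of $G_{e_{k-1}}$, and back into $G_{e_{k-2}}$; by the structural fact each U-turn joins one leaf from each of the two cherries of $G_{e_{k-2}}$. With these segments fixed, I would assemble the rest as an out-and-back sweep: start at a leaf of one cherry of $G_{e_0}$, run rightward covering $G_{e_0},\dots,G_{e_{k-2}}$ while absorbing the two forced U-turns, switch from one center-family to the other across the center--center edge inside $G_{e_{k-2}}$, and run back leftward covering the complementary cherries, finishing at a second leaf of $G_{e_0}$. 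The base case $k=3$ I would give by an explicit $18$-vertex listing, and the hops of the sweep all exist thanks to the $K_{2,2}$ incidence between consecutive double stars.

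The main obstacle is that a center can be reached only through its own two leaves, so centers cannot be carried along a ``straight'' traversal and must be threaded one at a time; combined with the rigid U-turns of $G_{e_{k-1}}$ this makes the routing delicate and repeatedly courts a short cycle (for instance, the two U-turns of $G_{e_{k-1}}$ together with a careless traversal of $G_{e_{k-2}}$ close a $6$-cycle rather than extending into a path). Placing the crossing correctly---at $G_{e_{k-2}}$, through its center--center edge, with the cherries entered in exactly the right order---and then verifying full coverage is the crux. I expect this to be cleanest either as an induction on $k$ that re-attaches the last two double stars to a path supplied by the inductive hypothesis, or, in the style of the honeycomb walks of Lemma~\ref{2hexagons}, as an explicit vertex listing given in cases according to the parity of $k$ and to whether consecutive arcs of $T$ receive equal or opposite Step-$0$ colors (equal colors yield a ``straight'' matching, opposite colors a ``crossed'' one).
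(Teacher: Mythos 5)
Your proposal is correct and takes essentially the same route as the paper: your out-and-back cherry sweep with both ends at $G_{e_0}$ is exactly the paper's decomposition $P = P_1 \circ P_3 \circ P_2$, where $P_1$ and $P_2$ are the forward and backward sweeps through complementary leaf--center--leaf cherries of $e_0,\dots,e_{k-3}$, and $P_3$ is precisely your turnaround: two U-turns through the cherries of $G_{e_{k-1}}$ joined by the center--center edge of $G_{e_{k-2}}$. The only difference is bookkeeping at the crux you flag: the paper disposes of the Step-0 color dependence by relabeling the six vertices of each arc along the trail so that every consecutive triple of arcs is in a standard form, and then gives $P_3$ explicitly in four cases according to $k \bmod 4$, rather than your proposed induction or equal/opposite-color case split.
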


\begin{proof}
Though each pair of consecutive directed arcs in $T$ forms a graph isomorphic to the graph in Figure~\ref{connectedDoubleStar}, there are many ways that three directed arcs can form a subgraph in $H$. Label the vertices in $T$ formed by $e_0$ and $e_1$ as in Figure~\ref{connectedDoubleStar} and suppose without loss of generality that the Hamilton path $P$ we wish to construct begins at $a_1$. 
Then the first six vertices of $P$ will be $a_{1,1},b_{1,1},a_{2,1},a_{3,2},b_{2,2},a_{4,2}$. 
Depending on the colors of $e_0$, $e_1$, and $e_2$, the subgraph in $H$ formed by the $18$ vertices represented by $e_0$, $e_1$, $e_2$ is one of the two graphs in Figure~\ref{18verts_directed}.
A simple permutation of the right-most six vertices in the graph on the right in Figure~\ref{18verts_directed} shows that the 
two graphs in Figure~\ref{18verts_directed} are isomorphic, but this permutation may change the way the 12 vertices are connected between $e_2$ and $e_3$. 
However, a similar permutation of the six vertices represented by $e_3$ can be applied so that the subgraph formed by the 18 vertices represented by $e_0,e_1,e_2$ is isomorphic to the subgraph formed by the $18$ vertices represented by $e_1,e_2,e_3$. 
So we may assume that any set of 18 vertices formed from $e_i,e_{i+1},e_{i+2}$ is isomorphic to the graph on the left in Figure~\ref{18verts_directed}.
We can form a path, $P_1$, through half of the vertices represented by $e_0,e_1,\ldots,e_{k-3}$ as shown by the solid black path in Figure~\ref{18verts_directed_ham3}.
Depending on the value of $k\pmod{4}$, there are four possibilities for a path,  $P_3$, through the 12 vertices represented by $e_{k-2}$ and $e_{k-1}$ as shown in Figure~\ref{18verts_directed_ham}. 
Whichever path is chosen for $P_3$, join $P_3$ to the dotted path $P_2$ through $e_{k-3},e_{k-4},\ldots,e_{0}$ shown in Figure~\ref{18verts_directed_ham3}. The dashed edges represent edges that are not used in any path. 
Thus there is a Hamilton path $P_1 \circ P_3 \circ P_2$ in $\widehat{H}$ using all arcs in the directed trail $T$ that begins at vertex $a_1$ and ends at vertex $a_4$ in $\widehat{H}$ (see Figure~\ref{18verts_directed_ham3}). 

Now we give the explicit construction of this path.
For $i=0,1, \ldots, k-1$, label the vertices in $\widehat{H}$ that correspond to $e_{i}$ with the elements in the set $\{a_{1,i},a_{2,i},a_{3,i},a_{4,i},b_{1,i},b_{2,i}\}$ as in Figure~\ref{connectedDoubleStar}.
The Hamilton path through $H$ will consist of three subpaths $P_1$, $P_2$ and $P_3$, which we now describe. 
Define the 3-paths $p_{i}$ and $\overline{p}_{i}$ for any $e_i$ depending on the value of $i$ modulo 4 as follows:

\[
p_i=\begin{cases}
[a_{1,i},b_{1,i},a_{2,i}] &\mbox{if $i \equiv 0 \pmod{4}$} \\
[a_{3,i},b_{2,i},a_{4,i}] &\mbox{if $i\equiv 1\pmod{4}$} \\
[a_{4,i},b_{2,i},a_{3,i}] &\mbox{if $i\equiv 2\pmod{4}$} \\
[a_{2,i},b_{1,i},a_{1,i}] &\mbox{if $i\equiv 3\pmod{4}$}\\
\end{cases}
\]
and
\[
\overline{p}_i=\begin{cases}
p_{i-1} &\mbox{if $i \equiv 1,3 \pmod{4}$} \\
p_{i+1} &\mbox{if $i\equiv 0,2\pmod{4}$.} \\
\end{cases}
\]

Then $P_{1}= p_0 \circ p_1 \circ \ldots p_{k-3}$ and $P_2=\overline{p}_{k-3} \circ \overline{p}_{k-4} \ldots \circ \overline{p}_{0}$; these are illustrated in Figure~\ref{18verts_directed_ham3} when $k=5$.  We define $P_{3}$ based on one of the endpoints of $P_{1}$. This is illustrated in Figure~\ref{18verts_directed_ham}
and is defined as follows.

\[
P_3=\begin{cases}
a_{4,k-3},  a_{4,k-2}, a_{4,k-1}, b_{2,k-1}, a_{3,k-1}, a_{2,k-2}, b_{1,k-2},&\\
b_{2,k-2}, a_{3,k-2}, a_{2,k-1}, b_{1,k-1}, a_{4,k-1}, a_{4,k-2}, a_{1,k-3} &\mbox{if $k \equiv 0 \pmod{4}$,} \\
&\\
a_{3,k-3},  a_{2,k-2}, a_{3,k-1}, b_{2,k-1}, a_{4,k-1}, a_{4,k-2}, b_{2,k-2},&\\
b_{1,k-2}, a_{1,k-2}, a_{1,k-1}, b_{1,k-1}, a_{2,k-1}, a_{3,k-2}, a_{2,k-3} &\mbox{if $k \equiv 1 \pmod{4}$,} \\
&\\
a_{1,k-3},  a_{1,k-2}, a_{2,k-1}, b_{1,k-1}, a_{2,k-1}, a_{3,k-2}, b_{2,k-2},&\\
b_{1,k-2}, a_{2,k-2}, a_{3,k-1}, b_{2,k-1}, a_{4,k-1}, a_{4,k-2}, a_{4,k-3} &\mbox{if $k \equiv 2 \pmod{4}$, and} \\
&\\
a_{2,k-3},  a_{3,k-2}, a_{2,k-1}, b_{1,k-1}, a_{1,k-1}, a_{1,k-2}, b_{1,k-2},&\\
b_{2,k-2}, a_{4,k-2}, a_{4,k-1}, b_{2,k-1}, a_{3,k-1}, a_{2,k-2}, a_{3,k-3} &\mbox{if $k \equiv 3 \pmod{4}$,} \\

\end{cases}
\]

Then $P=P_1 \circ P_3 \circ P_2$ is the desired Hamilton path.
\end{proof}

\begin{figure}[htb]
\begin{center}
\includegraphics[scale=1]{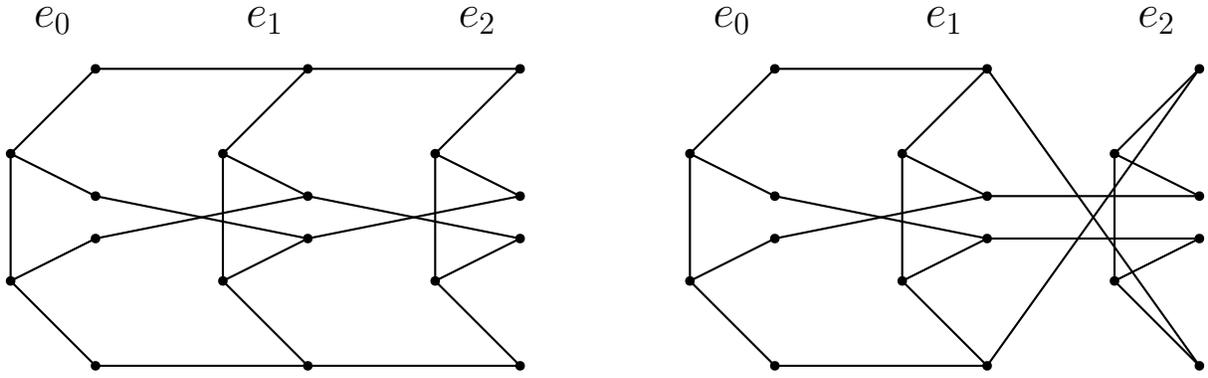}
\end{center}
\caption{Possibilities for the first three arcs of $T$}\label{18verts_directed}
\end{figure}

\begin{figure}[htb]
\begin{center}
	\includegraphics[scale=1]{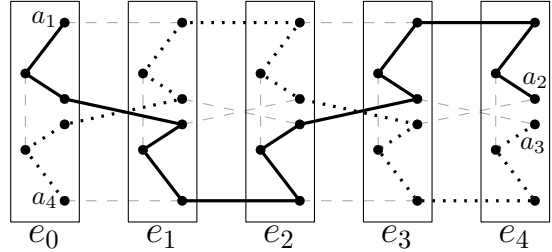}
\end{center}
\caption{The paths $P_{1}$ (solid) and $P_{2}$ (dotted)}\label{18verts_directed_ham3}
\end{figure}

\begin{figure}[htb]
\begin{center}
\includegraphics[scale=1]{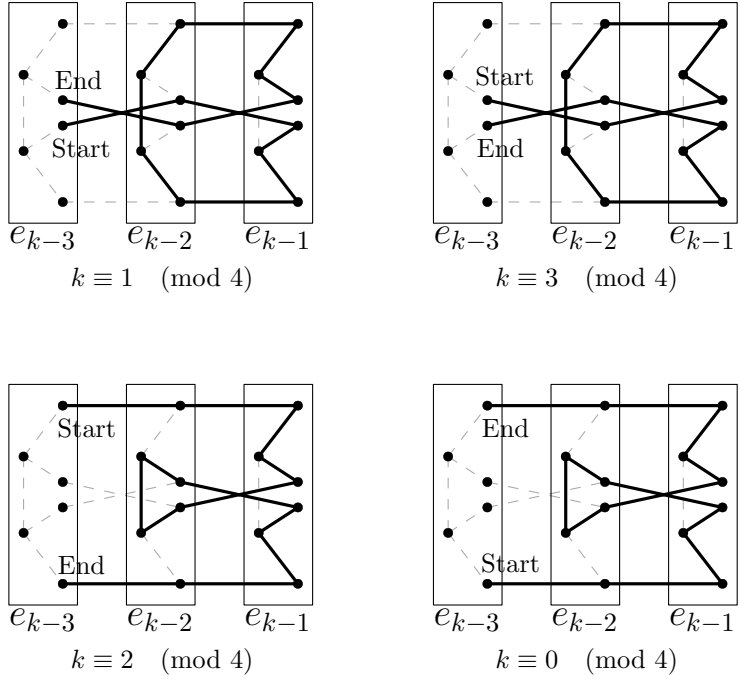}
\end{center}
\caption{The four possibilities for $P_{3}$}\label{18verts_directed_ham}
\end{figure}

Recall that because $v$ is even, $\l$ is also even. Let $R$ be the subgraph of the $2$-BIG of $\ts(v,\l)$ formed by the blocks from Steps $1c$, $2a$, $2b$, and $3$. The next lemma puts all of the pieces in this section together to show that $R$ contains a Hamilton path.

\begin{lemma}\label{hamPathSteps1c-3}
There is a Hamilton path in $R$ with endpoints 

\begin{itemize}
\item $A=\{\infty_{i_{1}},(a_2,j_1),(a_3,k_1)\}$ for any $a_2,a_3 \in \Z_{n}$ and $i_1,j_1,k_1 \in \{0,1\}$\\
and
\item $B=\{\infty_{i_{2}},(4,j_2),(n-2,k_2)\}$ for some $i_2,j_2,k_2 \in \{0,1\}$.
\end{itemize}
\end{lemma}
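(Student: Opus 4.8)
The plan is to build the Hamilton path in $R$ by stitching together the pieces established in Lemmas~\ref{etrail_ham}, \ref{part_etrail_ham}, and \ref{sg+sg+1connected}, and then to attach the Step~$3$ vertices at the end. First I would recall that by Lemma~\ref{digraphConnected} the directed graph $D'=\bigcup_{g} D_g'$ is strongly connected, and moreover each $D_g'$ is a union of arcs $(i+g,-2i+g)$ on the vertex set $\Z_n$. Since $D_g'$ is functional (out-degree $1$ at every vertex) and we have the opposite-arc structure from Construction~\ref{sblocks}, the underlying graph is Eulerian-friendly: the key combinatorial fact I would invoke is that the arcs of $D'$ can be decomposed into (or covered by) a single closed directed trail, or a small number of directed trails, so that Lemma~\ref{part_etrail_ham} applies to each trail. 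Concretely, I would take a directed trail $T=[e_0,e_1,\ldots,e_{k-1}]$ that uses every arc of $D'$; Lemma~\ref{part_etrail_ham} (for $k\ge 3$) then yields a Hamilton path through the subgraph $\widehat H$ of the Steps~$1c,2a,2b$ portion induced by $T$, with both endpoints lying on the same terminal arc.

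Next I would address the endpoints. Lemma~\ref{part_etrail_ham} gives a Hamilton path whose ends correspond to $e_0$ (or $e_{k-1}$) in Step $2a$ or $2b$, i.e.\ degree-$1$ vertices involving an $\infty$-point. The trail $T$ can be chosen so that one terminal arc is incident to the residues $4$ and $n-2$ in the appropriate layer, forcing that endpoint to be exactly a vertex of the form $B=\{\infty_{i_2},(4,j_2),(n-2,k_2)\}$; here I would use the explicit form of the Step~$2a/2b$ triples attached to an arc $(a,b)$, namely $\{\infty,(a,0),(b,0)\}$ and its siblings, so that choosing the terminal arc to be $(n-2,4)$ or its opposite produces the desired $B$. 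The other endpoint is free to be any $A=\{\infty_{i_1},(a_2,j_1),(a_3,k_1)\}$, matching the flexibility claimed, since the starting arc $e_0$ can be any arc and the path may begin at either of its two $\infty$-vertices.

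The remaining task is to incorporate the four Step~$3$ triples for each $g$, namely $\{\infty_0,\infty_1,(g,0)\}$, $\{\infty_0,\infty_1,(g,1)\}$, $\{\infty_0,(g,0),(g,1)\}$, $\{\infty_1,(g,0),(g,1)\}$. I would show each such block shares two points with some Step~$2a/2b$ or Step~$1c$ block already on the path, then splice it in by the standard detour trick used at the end of Lemma~\ref{2hexagons}: replace a subpath $(\ldots,u,\ldots)$ by $(\ldots,u,w,\ldots)$ whenever $w$ is a pendant neighbor not yet visited, or insert $w$ between two consecutive path vertices $u,u'$ both adjacent to $w$. The blocks $\{\infty_0,(g,0),(g,1)\}$ and $\{\infty_1,(g,0),(g,1)\}$ meet the Step~$1c$ blocks $\{(a,0),(b,0),(a,1)\}$ in two points when $g$ plays the role of $a$, and the two $\{\infty_0,\infty_1,(g,\cdot)\}$ blocks meet these in turn, so the four Step~$3$ blocks for each $g$ form a short attachable chain.

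The hard part will be the bookkeeping that guarantees a single trail $T$ (or a controlled concatenation of trails across the different indices $g\in\Z_{\l/2}$) simultaneously covers all arcs of $D'$, keeps the two prescribed endpoints, and leaves room to splice in every Step~$3$ block exactly once without creating a repeated vertex or breaking the lobster/path structure. In particular, when $\l/2>1$ the arcs come from several $D_g'$, and I must verify—using the strong connectivity of $D_g'\cup D_{g+1}'$ from Lemma~\ref{digraphConnected} together with Lemma~\ref{etrail_ham} for the junctions between consecutive arcs—that these per-index trails can be joined into one directed trail through a shared arc or vertex, so that Lemma~\ref{part_etrail_ham} is applied globally rather than piecewise. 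Managing the parity cases $k\bmod 4$ from Lemma~\ref{part_etrail_ham} at each junction, while preserving that the global endpoints remain $A$ and $B$, is where the delicate casework lies; I expect this to be the main obstacle and would handle it by fixing the terminal arcs first and routing the trail through the interior arcs afterward.
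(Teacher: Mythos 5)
The central step of your plan misapplies Lemma~\ref{part_etrail_ham}. As the paper notes just before that lemma (and as its proof $P=P_1\circ P_3\circ P_2$ shows), the Hamilton path it produces has \emph{both} endpoints on the \emph{same} terminal arc of the directed trail: the path runs out along $e_0,\ldots,e_{k-3}$, turns around through $e_{k-2},e_{k-1}$, and comes back to $e_0$. Your first paragraph records this correctly (``both endpoints lying on the same terminal arc''), but your second paragraph then claims one endpoint can be forced to be $B$ on a terminal arc in $\{(4,n-2),(n-2,4)\}$ while the other endpoint is an arbitrary $A$ on the starting arc $e_0$. That is exactly what the lemma does not give: applied to a single Euler trail of $D'$, it yields a path whose two ends both lie among the six vertices of one arc, so you cannot realize $A$ and $B$ on two different prescribed arcs this way. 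This is not a cosmetic slip; it is the reason the paper's proof splits the Euler tour at the arc $e_\ell$ containing $A$, applies Lemma~\ref{part_etrail_ham} only to the prefix $T_1$ (getting a path from $A$ back to another vertex $A'$ of $e_\ell$), and then covers the remaining arcs in \emph{consecutive pairs} using Lemma~\ref{etrail_ham}, since each pair-path advances from one arc to the next and the final pair ends on $e_k$ at $B$. The evenness needed for this pairing is what forces the paper's three cases (parity of $|\{e_{\ell+1},\ldots,e_k\}|$, and whether $\ell=1$). The casework you flag as the main obstacle --- joining trails across the indices $g$, and the $k\bmod 4$ cases at junctions --- is not where the difficulty lies: a single Euler tour of all of $D'$ exists outright, because every vertex has equal in- and out-degree and $D'$ is strongly connected by Lemma~\ref{digraphConnected}.

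A second, smaller point: when splicing in the Step~3 blocks you cannot use the detour trick of Lemma~\ref{2hexagons}, i.e., replacing $(\ldots,u,\ldots)$ by $(\ldots,u,w,u,\ldots)$, because that repeats $u$, and here you need a genuine Hamilton path rather than a lobster walk (the $Q_3$-product argument that legitimizes walks in Section~3 is not available in $R$). Only your second option is sound, and it is what the paper does: for each $g$ the four Step~3 blocks form a $K_4$; one deletes an edge of $P$ joining two consecutive path vertices arising from an arc $(g,b)$ and inserts a Hamilton path of that $K_4$ whose ends are adjacent to the two exposed vertices.
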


\begin{proof}
Let $G$, a subgraph of $R$, be the subgraph of the $2$-BIG of $\TS(v,\l)$ formed from the blocks in Steps~$1c$, $2a$, and $2b$.
Let $D_g'$ be a directed graph formed from Step 0 in Construction~\ref{sblocks} with vertex set $\mathbb{Z}_n$ such that there is a directed arc from $i+g$ to $-2i+g$ for each $i \in \mathbb{Z}_n$ where indices are calculated modulo $n$. Let $D'=\bigcup_{g\in \Z_{\l/2}} D_g'$.
Notice that the entire structure of the subgraph of the $2$-BIG of $\TS(v,\l)$ for Steps $1c$, $2a$, and $2b$ in Construction~\ref{sblocks} is dictated by the choice of which arcs in $D'$ are red or blue (see Step $0$ in Construction~\ref{sblocks}). 
We will form $G$ by using the graph $D'$, and we will handle the vertices in Step 3 of the  Construction~\ref{sblocks} by modifying the path we form in $G$.

By Lemma~\ref{digraphConnected}, the directed graph $D'$ is strongly connected, i.e. there is a directed walk from any vertex in $D'$ to any other vertex in $D'$. 
If $x \in \{g,g+1\}$, then $x$ has exactly one in-degree and one out-degree. Thus, since the in-degree of each vertex is the same as its out-degree and $D'$ is strongly connected, 
there is a directed Euler tour $\etour$ on the arcs in $D'$. 
The graph $G$ formed from $D'$ is connected by Lemma~\ref{sg+sg+1connected}, so we will use $\etour$ to find
a Hamilton path.
Each arc of $\mathbb{E}$ corresponds to a directed colored arc (red or blue) in $D'$ and this directed colored arc corresponds to six vertices in $G$ depending on Steps~$1c$, $2a$, or $2b$. 
Since $D_1'$ is a subgraph of $D'$, either $(4,n-2)$ or $(n-2,4)$ is an arc in $E(D')$.
Without loss of generality, let $\etour$ end on an arc $e_k\in \{(4,n-2),(n-2,4)\}\subseteq E(D')$ that represents $6$ vertices in $G$; one of these $6$ vertices in $G$ is vertex $B$. 
We know that we can have $e_k$ be the last arc in $\etour$ since we can think of the $\etour$ as the union of edge-disjoint directed closed trails all containing either the vertex $n-2$ or $4$, and we can just choose one arc of a closed trail to be the end of the Euler tour. 
This does mean that we forfeit some ability to choose on which arc our Euler tour begins. Fortunately, we will not simply follow the directed Euler tour when creating a Hamilton path in $G$.
Since $A$ exists, there exists an edge $(a_2,a_3)$ or $(a_3,a_2)$ in $D'$. 
Let $e_\ell$ be the arc in $\etour$ that represents six vertices, one of which is vertex $A$.

Partition the Euler tour $\etour$ into two or three directed trails: 
\begin{packedEnum}
\item[1)] $T_1=(e_1,\ldots,e_\ell)$ and $T_2=(e_{\ell+1},\ldots,e_k)$ if $\ell\neq 1$ and $|\{e_{\ell+1},e_{\ell+2},\ldots,e_k\}|$ is even,
\item[2)] $T_1=(e_{2},\ldots,e_{\ell})$, $T_2=(e_{\ell+1},e_{\ell+2},\ldots,e_{k-1})$, and $T_3=(e_k,e_1)$ if $\ell\neq 1$ and $|\{e_{\ell+1},e_{\ell+2},\ldots,e_k\}|$ is odd, and 
\item[3)] $T_1=(e_1,e_2,\ldots,e_{k-2})$ and $T_2=(e_k,e_{k-1})$ if $\ell=1$.
\end{packedEnum}
\textbf{Case 1:} Suppose that $\ell\neq 1$ and  $|\{e_{\ell+1},e_{\ell+2},\ldots,e_k\}|$ is even. By Lemma~\ref{part_etrail_ham}, there exists a path $P_1$ in the subgraph of $G$ represented by $T_1$ that begins at $A$ (one of the four vertices represented by $e_\ell$ in Step $2a$ or $2b$), and ends at 
some other vertex $A'$ represented by $e_{\ell}$.
By Lemma~\ref{etrail_ham} and since the length of $T_2$ is even, we can pair consecutive arcs (e.g., $e_{\ell+1}$ and $e_{\ell+2}$) in $T_2$ and find a path $P_2$ in the subgraph of $G$ that begins at a vertex adjacent to $A'$ and ends at the vertex $B$.
Thus we can form the intended Hamilton path.

\noindent\textbf{Case 2:} Suppose that $\ell\neq 1$ and  $|\{e_{\ell+1},e_{\ell+2},\ldots,e_k\}|$ is odd; then $|T_2|$ is even. By Lemma~\ref{part_etrail_ham}, there exists a path $P_1$ in the subgraph of $G$ represented by $T_1$ that begins at $A$ (one of the four vertices represented by $e_\ell$ in Step $2a$ or $2b$), and ends at 
some other vertex $A'$ represented by $e_{\ell}$.
By Lemma~\ref{etrail_ham} and since the length of $T_2$ is even, we can pair consecutive arcs in $T_2$ and find a path $P_2$ in the subgraph of $G$ that begins at a vertex adjacent to $A'$ and ends at a vertex $B'$ in $e_{k-1}$. Then by Lemma~\ref{part_etrail_ham}, there exists a path $P_3$ in the subgraph of $G$ represented by $T_3$ that begins at a vertex (one of the four vertices represented by $e_k$ in Step 2a or 2b) adjacent to $B'$ and ends at the vertex $B$ (one of the four vertices represented by $e_k$ in Step 2a or 2b). 
Note that $B$ only needs to be one of the four vertices represented by $e_k$ in Step 2a or 2b. So there is no need to specify which one is $B$ until we are forced to choose, as may happen in this case.

\noindent\textbf{Case 3:} Suppose that $\ell=1$. 
By Lemma~\ref{part_etrail_ham}, there exists a path $P_1$ in $G$ that begins at $A$ (one of the four vertices represented by $e_\ell=e_1$ in Step $2a$ or $2b$) and ends at another one of the four vertices in $G$, say $A'$ represented by $e_\ell$ in Step $2a$ or $2b$. By Lemma~\ref{part_etrail_ham}, there exists a path $P_2$ in $G$ that begins at a vertex in $G$ represented by $e_k$ which is adjacent to $A'$ and ends at another one of the four vertices in $G$ represented by $e_k$, which is $B$.

In each of the above cases, either $P=P_1\circ P_2$ or $P=P_1\circ P_2\circ P_3$ forms a Hamilton path through the subgraph of $G$ represented by Steps $1c$, $2a$, and $2b$.

It remains to augment $P$ to form a Hamilton path that contains all of the vertices from the subgraph of the $2$-BIG of $\ts(v,\l)$ formed from the blocks in Steps $1c$, $2a$, $2b$, and $3$. For each $g\in \Z_{\l/2}$, a $K_4$ is formed from the vertices in Step $3$. There is an arc $e_i=(g,b)$ in $\etour$ (since $D'$ is stongly connnected) for some $b\in V(D')$ that represents vertices in $G$ that are adjacent to the vertices in Step 3. Each of the $6$ vertices represented by the arc $e_i$ are adjacent to at least two vertices in this $K_4$. Delete some edge in $P$ between two of these $6$ vertices, say the edge joining $\{\infty_{x_3},(g,y_3),(b,z_3)\}$ to $\{(g,x_4),(b,y_4),(g,z_4)\}$ where $x_3,x_4,y_3,y_4,z_3,z_4\in \{0,1\}$, and add a path through all of the vertices in this $K_4$ with endpoints at the two ends of the edge deleted from $P$. Do this for each $K_4$ formed from Step $3$ to form the required Hamilton path. 
\end{proof}

\section{Conclusion}\label{sec:conclusion}

In this section we prove Theorem~\ref{v04mod12}. 
We will use the following observation, which is illustrated in Figure~\ref{lem_1_pics}.

\begin{observation}
\label{cube}
For any vertex $v_{1} \in V(Q_3)$, there are four different vertices at distance $1$ or $3$ from $v_1$ that we could choose as $v_2$ such that there is a Hamilton path through $Q_3$ with endpoints $v_{1}$ and $v_{2}$.
\end{observation}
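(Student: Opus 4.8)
The plan is to identify $Q_3$ with the cube graph on the vertex set $\{0,1\}^3$, where two binary strings are adjacent precisely when they differ in a single coordinate. First I would record the bipartite structure: $Q_3$ is bipartite with parts $E$ and $O$ consisting of the strings of even and odd Hamming weight respectively, and $|E| = |O| = 4$. Any Hamilton path in $Q_3$ visits the eight vertices in a sequence $x_1, x_2, \ldots, x_8$ that alternates between $E$ and $O$; since the path contains an even number of vertices, the endpoints $x_1$ and $x_8$ lie in opposite parts. Hence a necessary condition on $v_2$ is that it lie in the part opposite to $v_1$, equivalently that the distance from $v_1$ to $v_2$ be odd, i.e.\ $1$ or $3$. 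For a fixed $v_1$ there are exactly three vertices at distance $1$ (its neighbours) and exactly one at distance $3$ (its antipode), so there are exactly four candidate endpoints. This already shows the count of four is best possible; it remains to show that every one of these four candidates is actually realised.

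For the existence half I would exploit the symmetry of $Q_3$. Since $Q_3$ is vertex-transitive, I may assume $v_1 = 000$, so that the four candidates are $001$, $010$, $100$ (distance $1$) and $111$ (distance $3$). The coordinate permutations fixing $000$ act transitively on the three distance-$1$ candidates, so it suffices to produce a Hamilton path to one distance-$1$ vertex and a Hamilton path to the antipode; the remaining two distance-$1$ cases then follow by relabelling coordinates. Explicitly, the walk $(000, 010, 011, 111, 110, 100, 101, 001)$ is a Hamilton path from $000$ to $001$, and $(000, 001, 011, 010, 110, 100, 101, 111)$ is a Hamilton path from $000$ to $111$; in each case consecutive strings differ in exactly one coordinate and all eight vertices appear.

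Since $Q_3$ has only eight vertices, there is no serious obstacle here: once the parity argument pins down the four admissible endpoints, establishing existence is a finite verification that the symmetry reduction compresses to the two displayed paths. Alternatively, one could simply invoke the known fact that every hypercube $Q_n$ with $n \ge 2$ is Hamilton-laceable, i.e.\ admits a Hamilton path between any two vertices in opposite parts of its bipartition, which yields the conclusion immediately. The only point requiring care is the observation that the four vertices at distance $1$ or $3$ are not merely candidates but are exactly the endpoints that the bipartite parity constraint permits, so that ``four'' is simultaneously attainable and maximal.
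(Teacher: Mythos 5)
Your proposal is correct, and its existence half is in essence what the paper does: the paper offers no written argument at all, but simply exhibits the four Hamilton paths from $v_1$ to each admissible $v_2$ in a figure (Figure~\ref{lem_1_pics}), which is the same finite verification you perform, except that you compress it via symmetry --- exhibiting one path to a neighbour of $000$ and one to the antipode $111$, then obtaining the remaining two cases by permuting coordinates. (Both of your explicit walks check out: consecutive strings differ in one coordinate and each lists all eight vertices.) What your write-up adds beyond the paper is the bipartite parity argument: since $Q_3$ is bipartite with parts of size four, the endpoints of any Hamilton path lie in opposite parts, so the vertices at odd distance from $v_1$ --- the three neighbours and the antipode --- are not just four vertices that work but the \emph{only} vertices that can work. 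The paper's observation never claims this maximality, but your argument explains why the phrase ``distance $1$ or $3$'' is the natural condition in the statement, and your closing remark that $Q_n$ is Hamilton-laceable gives a one-line citation-based alternative. In short: same core approach as the paper, with a cleaner reduction and a sharper conclusion.
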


\begin{figure}[htb]
\begin{center}
\includegraphics[scale=.75]{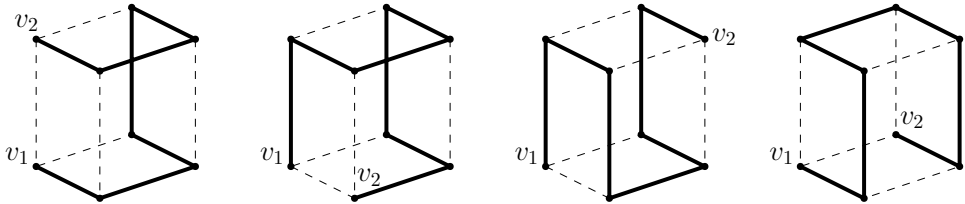}
\end{center}
\caption{Four Hamilton path from $v_1$ to $v_2$}\label{lem_1_pics}
\end{figure}


\setcounter{theorem}{0}
\begin{theorem}
If $v\equiv 0$ or $4\pmod{12}$, then for all admissible $v,\l$ there exists a simple $\TS(v,\l)$ with a cyclic $2$-intersecting Gray code.
\end{theorem}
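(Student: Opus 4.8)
The plan is to build a Hamilton cycle of the whole $2$-BIG $G$ by merging the three pieces assembled in Sections~\ref{sec:v04mod6}--\ref{path_2}: the Hamilton cycle $C$ on the Steps $1a$--$1b$ blocks minus the cube over $Z=\{n-2,1,4\}$ (Lemma~\ref{HamProduct}), a Hamilton path of that missing cube (Observation~\ref{cube}), and the Hamilton path $P$ on the Steps $1c$, $2a$, $2b$, $3$ blocks (Lemma~\ref{hamPathSteps1c-3}). First I would record the reductions: with $v=2n+2$, the hypothesis $v\equiv 0$ or $4\pmod{12}$ is exactly $n\equiv 5$ or $1\pmod 6$, and admissibility forces $\l$ to be even with $2\le\l\le 2n$, that is $1\le\l/2\le n$, so $\bigcup_{g=0}^{\l/2-1}\s_g$ is a simple $\TS(v,\l)$ by Construction~\ref{sblocks}. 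The case $\l=2$, where the pairing arguments of Sections~\ref{sec:v04mod6}--\ref{path_2} degenerate to a single set, is settled by the complete $\l=2$ spectrum of Erzurumluo\u{g}lu and Pike \cite{aras2}, so I would assume $\l\ge 4$.

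Next I would reinsert the deleted cube. Write the Steps $1a$--$1b$ subgraph as $H^n_{0,\ldots,\l/2-1}\mathbin{\square}Q_3$ and let $Q_3^Z$ be the $Q_3$-fibre over $Z$. The crucial point is that $Z$ is an endpoint of the Hamilton walk of Lemma~\ref{2hexagons}, hence a leaf of its induced tree, so the successor $s$ of $Z$ in that walk is a leaf of the spanning tree $W$ of $H^n_{0,\ldots,\l/2-1}\setminus Z$ from Corollary~\ref{HamWalk}. Let $s'$ be the unique $W$-neighbour of $s$. In the Hamilton cycle $C$ of $W\mathbin{\square}Q_3$ supplied by Lemma~\ref{HamProduct}, the only neighbours of a vertex $(s,q)$ are $(s',q)$ and the vertices $(s,q')$ with $q'\sim q$ in $Q_3$; since $(s,q)$ has degree two in $C$ and at most one of its edges is the single product edge $(s,q)(s',q)$, the cycle $C$ must use a cube-direction edge $(s,q_1)(s,q_2)$ inside the $s$-fibre. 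Because $s\sim Z$ in $H^n_{0,\ldots,\l/2-1}$, each $(s,q_i)$ is adjacent in $G$ to $(Z,q_i)\in Q_3^Z$, and as $q_1\sim q_2$ there is a Hamilton path of $Q_3^Z$ from $(Z,q_1)$ to $(Z,q_2)$ by Observation~\ref{cube}. Deleting $(s,q_1)(s,q_2)$ from $C$ and routing through this path gives a Hamilton cycle $C'$ of all the Steps $1a$--$1b$ blocks.

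Finally I would graft on $P$. Lemma~\ref{hamPathSteps1c-3} gives a Hamilton path $P$ with one end $B=\{\infty_{i_2},(4,j_2),(n-2,k_2)\}$ and the other end $A$ free to sit on any arc of $D'$. By design $B$ shares the pair $\{(4,j_2),(n-2,k_2)\}$ with the cube vertex $w_B=\{(n-2,k_2),(1,y),(4,j_2)\}\in Q_3^Z$, so $B\sim w_B$ in $G$. In $C'$ the vertex $w_B$ lies on the inserted cube path and therefore has a neighbour $x=\{(n-2,x'),(1,y'),(4,z')\}\in Q_3^Z$; choosing $A=\{\infty_{i_1},(1,y'),(n-2,x')\}$ on the arc $(1,n-2)\in D'$ with these matching second coordinates makes $A\sim x$. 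Deleting the edge $xw_B$ of $C'$ and inserting $x$--$A$--$P$--$B$--$w_B$ produces a single spanning cycle of $G$, which is the required cyclic $2$-intersecting Gray code.

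The heart of the argument -- and the step I expect to be the main obstacle -- is establishing that $C$ is forced to traverse a cube-direction edge within a fibre over a neighbour of $Z$, for otherwise the deleted cube $Q_3^Z$ could not be reattached. This is exactly why $Z$ must be chosen as an endpoint of the walk in Lemma~\ref{2hexagons}: the resulting leaf property of $s$ makes such an edge unavoidable in every Hamilton cycle of $W\mathbin{\square}Q_3$. By comparison the final grafting of $P$ is routine, since it only uses the freedom in the endpoint $A$ together with the deliberately matched pair $\{4,n-2\}$ that $B$ shares with $Z$.
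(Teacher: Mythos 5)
Your proposal assembles the Hamilton cycle from the same three pieces as the paper (the cycle in $W\square Q_3$ from Lemma~\ref{HamProduct}, the Hamilton path of the cube over $Z$ from Observation~\ref{cube}, and the Hamilton path through $R$ from Lemma~\ref{hamPathSteps1c-3}), and two of your three steps are sound. Your reduction of $\l=2$ to \cite{aras2} is in fact a point the paper glosses over: for $\l=2$ the Step~1a triples of a single $\s_g$ pairwise share at most one point, so Lemma~\ref{2hexagons} and Lemma~\ref{sg+sg+1connected} do not apply and some citation is genuinely needed. Your grafting of the $R$-path is also correct, and is even more careful than the paper's version, since you place the free endpoint $A$ on the concrete arc $(1,n-2)\in E(D_0')$, whereas Lemma~\ref{hamPathSteps1c-3} really requires the pair in $A$ to be an arc of $D'$.

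The genuine gap is exactly at the step you flag as the heart of the argument: reinserting the cube $Q_3^Z$ requires that the Hamilton cycle $C$ of $W\square Q_3$ use a cube-direction edge in the $s$-fibre, and for that you need $s$ to be a leaf of $W$. Your justification --- ``$Z$ is an endpoint of the Hamilton walk, hence a leaf of its induced tree, so the successor $s$ of $Z$ is a leaf of $W$'' --- is a non sequitur twice over: a Hamilton walk may revisit its initial vertex, so an endpoint need not be a leaf of the induced tree; and even if $Z$ is a leaf, deleting it does not make its neighbour a leaf. For instance, a walk of the form $(Z,s,t_1,s,t_2,s,\ldots)$ induces a perfectly good spanning lobster with endpoint $Z$ in which $s$ has large degree, and nothing in the statements of Lemma~\ref{2hexagons} or Corollary~\ref{HamWalk} excludes it; in particular, for $\frac{\l}{2}$ odd the walk of Lemma~\ref{2hexagons} is built precisely by inserting such detours $(\ldots,v,v',v,\ldots)$. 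If $s$ has degree at least $2$ in $W$, a Hamilton cycle of $W\square Q_3$ can traverse all eight vertices $(s,q)$ using only product edges, and the edge you want to delete need not exist. The leaf property does hold for the specific walks constructed in Lemma~\ref{2hexagons} (e.g.\ the successor $b_1$ of $a_1$ in the $n=11$ walk occurs only once, and the detours attach only at $\s_{\l/2-2}$-vertices while $s\in\s_0$), but establishing that means reopening the proof of Lemma~\ref{2hexagons}, not citing it as a black box. The paper's proof needs no such structural claim about $C$: it deletes an arbitrary cycle edge at a vertex $x$ in the fibre over a walk-neighbour $Z'$ of $Z$, and then chains the three paths into a cycle, $x\xrightarrow{P_1}x'\sim y\xrightarrow{P_2}y'\sim z\xrightarrow{P_3}z'\sim x$, with the cube path $P_3$ sandwiched between the $R$-path and the broken cycle; only adjacency facts plus the four-fold choice of $z'$ in Observation~\ref{cube} are used. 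Reordering your splices in this way (inserting the cube and the $R$-path simultaneously into the single break, rather than the cube first and the $R$-path second) repairs your argument without the leaf claim.
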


\begin{proof}
Let $G$ be the $2$-BIG of the $\TS(v,\l)$ formed from Construction~\ref{sblocks}. 
By Lemma~\ref{HamProduct}, there is a Hamilton cycle through the subgraph $W \square Q_{3}$. Furthermore, because $\{Z,Z'\} \in E(W^{n}_{0,1})$ for some for some $Z'\in H_{0,1}^n$,
it follows that we may remove an edge from the Hamilton cycle with endpoint $x$, a vertex in the cube corresponding to $Z'$. Then $x$ is of
the form

\[x=\{(X_1,i_0),(X_2,j_0),(X_3,k_0)\}\]
where $\{X_1,X_2,X_3\}$ contains one of the pairs $\{1,4\}$, $\{1,n-2\}$, or $\{4,n-2\}$. Let $x'=\{(X_{1}',i_1),(X_{2}',j_1),(X_{3}',k_1)\}$ be
the other endpoint of the removed edge. The result is a Hamilton $(x,x')$-path, $P_{1}$.

By Lemma~\ref{hamPathSteps1c-3}, there is a Hamilton path, $P_{2}$ through the subgraph $R$ with endpoints

\[y=\{\infty_{i_2}, (X_{2}',j_2),(X_{3}',k_2)\} \mbox{ for any } X'_2, X'_3 \in \Z_n \mbox{ and }i_2,j_2,k_2 \in \{0,1\}\]
\[y'=\{\infty_{i_3}, (4,j_3),(n-2,k_3)\} \mbox{ for some }i_3,j_3,k_3 \in \{0,1\}.\]

\noindent Choose $j_2=j_1$ and $k_2=k_1$ so $y$ is adjacent to $x'$.

There is a Hamilton path, $P_3$, through the cube corresponding to $Z \in V(H^{n}_{0,1, \ldots,\frac{\l}{2}-1})$, by Observation~\ref{cube}. If we choose one
of the endpoints to be $z=\{(1,i_4),(4,j_4),(n-2,k_4)\}$ with $j_4=j_3$ and $k_4=k_3$, then $z$ is adjacent to $y'$.
We may assume without loss of generality, that $z=\{(1,0), (4,0), (n-2,0)\}$. Then by Observation~\ref{cube}, the other endpoint of $P_{3}$, $z'$, can
be chosen from the set $\{\{(1,0),(4,1),(n-2),1)\}, \{(1,1),(4,1),(n-2),0)\}, \{(1,0),(4,0),(n-2),0)\}, \{(1,1),(4,0),(n-2),1)\}\}$.
Thus we may choose $z'$ so that it is adjacent to $x$. Then $C=P_1 \circ P_2 \circ P_3$ is a Hamilton cycle.

\end{proof}

Our future work will use Theorem~\ref{v04mod12} as a base case to show that there exists a $\ts(v,\l)$ whose $2$-BIG is Hamiltonian when $v$ is even. 

\appendix
\section{Appendix}
\label{sec:appendix}

\setcounter{theorem}{0}
\renewcommand{\thetheorem}{\Alph{section}.\arabic{theorem}}

\begin{lemma}\label{appendix:subgraph}
$H^{n-6}_{g-1,g}$ is a subgraph of  $H^n_{g-1,g}$ for any $n$.
\end{lemma}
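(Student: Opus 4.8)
The plan is to establish the subgraph relationship by exhibiting an explicit combinatorial correspondence between the vertices of $H^{n-6}_{g-1,g}$ and a subset of the vertices of $H^n_{g-1,g}$, and then checking that this correspondence preserves adjacency. The vertices are already parametrized by the index pairs $(i,j)$ through the explicit triple formulas $U_{i,j}$ given just before the statement, with separate formulas for $n\equiv 1\pmod 6$ and $n\equiv 5\pmod 6$. So the first step is to pin down precisely which values of $(i,j)$ describe the ``inner'' copy of $H^{n-6}_{g-1,g}$ sitting inside $H^n_{g-1,g}$: because the honeycomb grows by one outer ring of hexagons when $n$ increases by $6$, I expect the inner copy to correspond to the smaller index ranges $i\in\{0,\ldots,(n-13)/3\}$, $j\in\{1,\ldots,3i+4\}$ in the $n\equiv 1$ case (and the analogous shifted ranges for $n\equiv 5$), matching exactly the index set that the $U_{i,j}$ formulas would produce when $n$ is replaced by $n-6$.

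Second, I would verify that the triples produced by the $n$-formulas on this restricted index range literally coincide with the triples produced by the $(n-6)$-formulas. This amounts to a direct algebraic comparison: substitute $n-6$ for $n$ in the defining formula for $U_{i,j}$ and check that the resulting triple (taken modulo $n-6$) lifts to the same three residues modulo $n$ that the $n$-formula assigns, after accounting for the fact that $a+b+c$ must equal $3g-3$ or $3g$ in the appropriate modulus. Since each coordinate of $U_{i,j}$ is an affine function of $i$ and $j$ with no explicit $n$-dependence inside the range (the $n$ only enters through the index bounds and the ambient modulus), this check should reduce to confirming that the three coordinate expressions are identical and that the triple-sum congruence is compatible with both moduli. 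The parity-of-$j$ bookkeeping (which distributes triples between $\s_{g-1}$ and $\s_{g}$ via the subscript) must also match.

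Third, and this is where the real content lies, I would verify that adjacency is preserved: two vertices $U_{i_1,j_1}$ and $U_{i_2,j_2}$ that are adjacent in $H^{n-6}_{g-1,g}$ (i.e. their triples share exactly two points) remain adjacent in $H^n_{g-1,g}$, and conversely that no spurious new adjacencies appear inside the inner copy. Forward adjacency is automatic once the triples coincide, since the intersection size is a property of the triples themselves. The subtler direction is completeness: I must confirm that the inner vertices have no $2$-intersections among themselves beyond those already present in $H^{n-6}_{g-1,g}$, which follows because sharing two coordinates forces the third coordinates to be distinct and the triple-sum congruence then determines them, so the local adjacency pattern is intrinsic to the hexagonal incidence structure and does not change when the outer ring is added. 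The embedding illustrated in Figure~\ref{hex19} (the bold $H^{13}_{g-1,g}$ inside $H^{19}_{g-1,g}$, and $H^{11}_{g-1,g}$ inside $H^{17}_{g-1,g}$) is exactly the picture this argument makes rigorous.

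The main obstacle I anticipate is the notational/indexing step rather than any deep idea: getting the index translation exactly right so that the $n$-formula restricted to the small range reproduces the $(n-6)$-formula on the nose, including the half-integer coefficients and the $j$-parity case split, and handling the two residue classes $n\equiv 1$ and $n\equiv 5\pmod 6$ separately with their slightly different boundary vertices (such as the special $U_{0,5}$, $U_{0,6}$, $U_{0,7}$ defined by hand). Once the correspondence is written down correctly, the adjacency verification is a short consequence of the fact that a triple in Step $1a$ is completely determined by any two of its points together with the sum condition modulo the relevant $n$, so I would organize the proof to do all the affine-substitution bookkeeping first and then invoke this determinacy to close out adjacency in one stroke.
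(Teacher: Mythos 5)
Your plan fails at exactly the step you declare ``automatic.'' You write that once the triples of the inner copy coincide with triples of $H^n_{g-1,g}$, adjacency transfers for free because ``the intersection size is a property of the triples themselves.'' But the triples never coincide: a vertex of $H^{n-6}_{g-1,g}$ is a triple of residues modulo $n-6$, while a vertex of $H^{n}_{g-1,g}$ is a triple of residues modulo $n$. Only the defining affine expressions agree; their reductions do not, so adjacency is not intrinsic to the index formulas. This kills your proposed index-preserving embedding precisely at the edges that glue the $U$-half of the honeycomb to the $\overline{U}$-half. In $H^{n-6}_{g-1,g}$ the outermost row is $i=(n-13)/3$, and the edges $U_{(n-13)/3,j}\sim\overline{U}_{(n-13)/3,j}$ ($j$ odd) exist only because of reduction modulo $n-6$ (this is the paper's fifth adjacency type, applied with parameter $n-6$). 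Your map must send these edges to edges of $H^n_{g-1,g}$, but it does not: reducing the same formulas modulo $n$ (take $n\equiv 1\pmod 6$, $j$ odd, $3i=n-13$) gives the triples
\[
\Bigl\{\,g-8,\; g-\tfrac{3j+19}{2},\; g+\tfrac{3j+29}{2}\,\Bigr\}
\quad\text{and}\quad
\Bigl\{\,g+7,\; g+\tfrac{3j+17}{2},\; g-\tfrac{3j+31}{2}\,\Bigr\},
\]
and a short check of the nine coordinate differences (using $\gcd(3,n)=1$, $n$ odd, and $1\le j\le n-9$) shows that at most one of them can vanish modulo $n$ (only at $j=n-10$), so these triples share at most one point and are never adjacent in $H^n_{g-1,g}$. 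Since those are the only edges joining the two halves of $H^{n-6}_{g-1,g}$, the image of your map is in fact disconnected between its $U$-side and $\overline{U}$-side, while $H^{n-6}_{g-1,g}$ is connected: the inner copy simply does not sit at the index range $i\le (n-13)/3$. As the paper's figures (and the walk in Lemma~2, which uses $U_{0,j}$ and $\overline{U}_{0,j}$ as part of the \emph{outer} ring being stripped off) show, the true embedding is shifted toward the middle of the honeycomb, so that the boundary of the inner copy lands on the central rows $i=(n-7)/3$, where $H^n_{g-1,g}$ genuinely has gluing edges $U_{(n-7)/3,j}\sim\overline{U}_{(n-7)/3,j}$; establishing those mod-$n$ adjacencies is real work, and it is exactly what the paper's proof does.

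There is a second omission: you never verify injectivity, i.e.\ that the affine formulas produce pairwise distinct triples modulo $n$. Because every coordinate is affine in $i,j,g$ and is then reduced modulo $n$, collisions must be excluded by a case analysis, and this is in fact the bulk of the paper's appendix proof (all four opening paragraphs), with the adjacency verifications coming afterwards. Conversely, the step you identify as ``where the real content lies'' --- showing no spurious adjacencies arise inside the inner copy --- is not needed at all, since the lemma asserts a subgraph, not an induced subgraph. So the two items your plan treats as routine bookkeeping (injectivity modulo $n$, and the modulus-dependent boundary adjacencies) are precisely the content of the lemma, and the item you plan to labor over is dispensable.
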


\begin{proof}
First, we show that each $U_{i,j}$ is unique.
Elements in $U_{i_1,j_1}$ sum to $3g-3$ or $3g$ when $j_1\equiv 0\pmod{2}$ and $n\equiv 1$ or $5\pmod{n}$ respectively, while $U_{i_2,j_2}$ sums to $3g$ or $3g-3$ when $j_2\equiv 1\pmod{2}$ and $n\equiv 1$ or $5\pmod{n}$. Therefore, $U_{i,j_1}\neq U_{i,j_2}$ when $j_1\not\equiv j_2 \pmod{2}$. 
It remains to show $U_{i_1,j}\neq U_{i_2,j}$ when $i_1\neq i_2$. In order
to show this, we assume that they are equal. This can only happen if there is equivalence modulo $n$ between each of the points in $U_{i_1,j}$ and $U_{i_2,j}$. We will show that in each case, a contradiction arises.
Let $j\equiv 1\pmod{2}$ and $n\equiv 1\pmod{6}$ so that
\begin{align*}
U_{i_1,j} &= \left\{g+3i_1+5, \frac{1}{2}(2g-3j+6i_1+7),\frac{1}{2} (2g+3j-12i_1-23)\right\}_{g-1}=:\{\alpha_1,\beta_1,\gamma_1\}, \text{ and} \\
U_{i_2,j} &= \left\{g+3i_2+5, \frac{1}{2}(2g-3j+6i_2+7),\frac{1}{2} (2g+3j-12i_2-23)\right\}_{g}=:\{\alpha_2,\beta_2,\gamma_2\}.
\end{align*}
Notice that if $\alpha_1\equiv \alpha_2\pmod{n}$, $\beta_1\equiv \beta_2\pmod{n}$, or $\gamma_1\equiv \gamma_2\pmod{n}$, then $i_1-i_2\equiv 0\pmod{n}$. In fact, this previous statement is true regardless of whether $n\equiv 1$ or $5\pmod{6}$, or $j\equiv 0$ or $1\pmod{2}$. But this implies $i_1-i_2\equiv 0\pmod{n}$ and we assumed this cannot occur. 
So we may assume that $\alpha_1,\beta_1,\gamma_1$ is not equivalent to $\alpha_2,\beta_2,\gamma_2$ modulo $n$ respectively when $n\equiv 1$ or $5\pmod{6}$ or $j\equiv 0$ or $1 \pmod{2}$. 
Suppose that $\alpha_1\equiv \beta_2\pmod{n}$, $\beta_1\equiv \gamma_2\pmod{n}$, and $\gamma_1\equiv \alpha_2\pmod{n}$. Then we have $3(i_1-i_2+\frac{j+1}{2})\equiv 0\pmod{n}$, $3(i_1+2i_2-j+5)\equiv 0\pmod{n}$, and $-3(2i_1+i_2-\frac{j-11}{2})\equiv 0\pmod{n}$. 
The first and third equivalence relations tell us $i_1\equiv -2\pmod{n}$, which tell us $i_1 \geq n-2$, but we have defined $i_1 \leq \frac{n-7}{3}$, so we have a contradiction.
The only other alternative is if $\alpha_1\equiv \gamma_2\pmod{n}$, $\beta_1\equiv \alpha_2\pmod{n}$, and $\gamma_1\equiv \beta_2\pmod{n}$, and similar argument shows a contradiction as well. Thus $U_{i_1,j}\neq U_{i_2,j}$ for $j\equiv 1\pmod{2}$ and $n\equiv 1\pmod{6}$.

Now suppose $n\equiv 1\pmod{6}$ and $j\equiv 0\pmod{2}$. Then if we again assume $\alpha_1\equiv \beta_2\pmod{n}$, $\beta_1\equiv \gamma_2\pmod{n}$, and $\gamma_1\equiv \alpha_2\pmod{n}$, we get
$\frac{3}{2}(2i_1-2i_2+j)\equiv 0\pmod{n}$, $3(5+i_1+2i_2-j)\equiv 0\pmod{n}$, and $-\frac{3}{2}(10+4i_1+2i_2-j)\equiv 0\pmod{n}$.  
The first and third equivalences tell us that $3i_1\equiv-5\pmod{n}$, but then $i_1\geq\frac{n-5}{3}$ and we defined $i_1\leq \frac{n-8}{3}$, a contradiction. 
Thus $U_{i_1,j}\neq U_{i_2,j}$ for $j\equiv 0\pmod{2}$ and $n\equiv 1\pmod{6}$.

Suppose that $n\equiv 5\pmod{6}$ and $j\equiv 1\pmod{2}$. Then using the same strategy as above, we get that 
$3(6+2i_1+i_2-j)\equiv 0\pmod{n}$, $-\frac{3}{2}(-1+2i_1-2i_2-j)\equiv 0\pmod{n}$, and $-\frac{3}{2}(13+2i_1+4i_2-j)\equiv 0\pmod{n}$. 
The last two equivalences tell us that $3i_2\equiv -7\pmod{n}$, but we assumed that $i_2\leq \frac{n-8}{3}$, a contradiction.
Thus $U_{i_1,j}\neq U_{i_2,j}$ for $j\equiv 1\pmod{2}$ and $n\equiv 5\pmod{6}$.

Suppose that $n\equiv 5\pmod{6}$ and $j\equiv 0\pmod{2}$. Then using the same strategy as above, we get that 
$3(6+2i_1+i_2-j)\equiv 0\pmod{n}$, $-\frac{3}{2}(2i_1-2i_2-j)\equiv 0\pmod{n}$, and $-\frac{3}{2}(12+2i_1+4i_2-j)\equiv 0\pmod{n}$.
The last two equivalences tell us that $i_2\equiv -2$, but this is impossible since $i_2\leq \frac{n-8}{3}$, a contradiction. In all cases, if
$\alpha_1\equiv \gamma_2\pmod{n}$, $\beta_1\equiv \alpha_2\pmod{n}$, and $\gamma_1\equiv \beta_2\pmod{n}$, a similar contradiction arises.
Thus $U_{i_1,j}\neq U_{i_2,j}$. 
The same methods can be used to show that $U_{i_1,j}\neq \overline{U}_{i_2,j}$ and $\overline{U}_{i_1,j}\neq \overline{U}_{i_2,j}$. 

We next show that in $H^n_{g-1,g}$ the following holds: 
\begin{packedItem}
\item $U_{i,j}\sim U_{i,j+1}$, 
\item $U_{i,j}\sim U_{i+1,j+3}$ if $j$ is odd and $n\equiv 1\pmod{6}$, or if $j$ is even and $n\equiv 5\pmod{6}$, 
\item $\overline{U}_{i,j}\sim \overline{U}_{i,j+1}$, 
\item $\overline{U}_{i,j}\sim \overline{U}_{i+1,j+3}$ if $j$ is odd, and 
\item either $U_{(n-7)/3,j}\sim \overline{U}_{(n-7)/3,j}$ if $j\equiv 1\pmod{2}$ and $n\equiv 1\pmod{6}$ or  $U_{(n-8)/3,j}\sim \overline{U}_{(n-8)/3,j}$ if $j\equiv 1\pmod{2}$ and $n\equiv 5\pmod{6}$. 
\end{packedItem}
Since the parity of $j$ is different between $U_{i,j}$ and $U_{i,j+1}$, it is an easy calculation to verify that $U_{i,j}$ and $U_{i,j+1}$ are adjacent when $n\equiv 1$ or $5\pmod{6}$ by examining the definition of $U_{i,j}$. This is true for both $n\equiv 1\pmod{6}$ and $n\equiv 5\pmod{6}$. 

Suppose that $j\equiv 1\pmod{2}$ and $n\equiv 1\pmod{6}$. Then 
\begin{align*}
U_{i+1,j+3}&=\left\{g+3(i+1)+5,\frac{1}{2}(2g-3(j+3)+6(i_1+1)+10),\right.\\
&\phantom{\;\;\;\;\;}\left.\frac{1}{2} (2g+3(j+3)-12(i_1+1)-20)\right\}_{g}\\
&= \left\{g+3i+8,\frac{1}{2}(2g-3j+6i+7),\frac{1}{2}(2g+3j-12i-23)\right\}_{g}.
\end{align*}
Thus $U_{i,j}\sim U_{i+1,j+3}$. The same calculation will show that $U_{i,j}\sim U_{i+1,j+3}$ when $j\equiv 0\pmod{2}$ and $n\equiv 5\pmod{6}$. 
Since $\overline{U}_{i,j}=\{a',b',c'\}$ is defined by $U_{i,j}=\{a,b,c\}$ where $a'=2g-1-a$, $b'=2g-1-b$, and $c'=2g-1-c$, it is clear that $\overline{U}_{i,j}\sim \overline{U}_{i,j+1}$ and $\overline{U}_{i,j}\sim\overline{U}_{i+1,j+3}$.
It remains to show that when $j$ is odd either $U_{(n-7)/3,j}\sim \overline{U}_{(n-7)/3,j}$ or  $U_{(n-8)/3,j}\sim \overline{U}_{(n-8)/3,j}$ if $n\equiv 1\pmod{6}$ or if $n\equiv 5\pmod{6}$ respectively. 
Suppose that $n\equiv 1\pmod{6}$ and $j\equiv 1\pmod{2}$. Then 
\begin{align*}
U_{(n-7)/3,j} &= \left\{g+n-2,\frac{1}{2}(2g-3j+2n-7),\frac{1}{2}(2g+3j-4n+5)\right\}_{g-1}
\end{align*}
and
\begin{align*}
\overline{U}_{(n-7)/3,j} &= \left\{g-n+1,\frac{1}{2}(2g+3j-2n+5),\frac{1}{2}(2g-3j+4n-7)\right\}_{g}.
\end{align*}
Since the points in the triples are calculated modulo $n$, it is clear that $U_{(n-7)/3,j}\sim\overline{U}_{(n-7)/3,j}$.
The argument will be similar for $n\equiv 5\pmod{6}$, thus it is omitted.

Based on the adjacencies given above, it is clear that $H^{n-6}_{g-1,g}$ is a subgraph of  $H^n_{g-1,g}$ for any $n$.
\end{proof}

 \bibliographystyle{amsplain}
\bibliography{vdec2}

\providecommand{\bysame}{\leavevmode\hbox to3em{\hrulefill}\thinspace}
\providecommand{\MR}{\relax\ifhmode\unskip\space\fi MR }
\providecommand{\MRhref}[2]{%
  \href{http://www.ams.org/mathscinet-getitem?mr=#1}{#2}
}
\providecommand{\href}[2]{#2}
\begin{thebibliography}{10}

\bibitem{AHM}
B.~Alspach, K.~Heinrich, and B.~Mohar, \emph{A note on {H}amilton cycles in
  block-intersection graphs}, Finite Geometries and Combinatorial
  Designs-Contemporary Mathematics \textbf{111} (1990), 1--4.

\bibitem{BP}
Vladimir Batagelj and Toma Pisanski, \emph{Hamiltonian cycles in the cartesian
  product of a tree and a cycle}, Discrete Mathematics \textbf{38} (1982),
  no.~2-3, 311--312.

\bibitem{CJ}
M.~Colbourn and J.~Johnstone, \emph{Twofold triple systems with a minimal
  change property}, Ars Combin \textbf{18} (1984), 151--160.

\bibitem{D}
M.~Dehon, \emph{On the existence of 2-designs {S}$_\lambda$(2,3,$\upsilon$)
  without repeated blocks}, Discrete Mathematics \textbf{43} (1983), no.~2-3,
  155--171.

\bibitem{dewar}
M.~Dewar, \emph{Gray codes, universal cycles and confguration orderings for
  block designs}, Ph.D. thesis, Carleton University, 2007.

\bibitem{aras1}
A.~Erzurumluo{\u{g}}lu and D.~Pike, \emph{Twofold triple systems without
  2-intersecting gray codes}, Designs, Codes and Cryptography \textbf{83}
  (2017), no.~3, 611--631.

\bibitem{aras2}
\bysame, \emph{Twofold triple systems with cyclic 2-intersecting gray codes},
  Journal of Combinatorial Designs \textbf{26} (2018), no.~4, 154--173.

\bibitem{horak}
P.~Hor{\'a}k, \emph{Decomposing steiner triple systems into small
  configurations}, Ars Combinatoria \textbf{26} (1988), 91--105.

\bibitem{HPR}
P.~Hor{\'a}k, D.~Pike, and M.~Raines, \emph{Hamilton cycles in
  block-intersection graphs of triple systems}, Journal of Combinatorial
  Designs \textbf{7} (1999), no.~4, 243--246.

\bibitem{mah}
E.~S. Mahmoodian, \emph{The intersection graph of ${B}[3, 2; 6]$ design is the
  petersen graph}, Bull. Inst. Combin. Appl \textbf{4} (1992), 110.

\bibitem{CS}
Carla Savage, \emph{A survey of combinatorial gray codes}, SIAM review
  \textbf{39} (1997), no.~4, 605--629.

\bibitem{S}
S.~Schreiber, \emph{Some balanced complete block designs}, Israel Journal of
  Mathematics \textbf{18} (1974), no.~1, 31--37.

\end{thebibliography}

\end{document}